\newcommand{\stkout}[1]{\ifmmode\text{\sout{\ensuremath{#1}}}\else\sout{#1}\fi}
\numberwithin{equation}{section}
\newtheoremstyle{break}{\topsep}{\topsep}{\itshape}{}{\bfseries}{.}{\newline}{}
\newtheoremstyle{exampl}{\topsep}{\topsep}{\upshape}{}{\bfseries}{.}{\newline}{}
\theoremstyle{plain}
\newtheorem{thm}{Theorem}[section]
\newtheorem{lem}[thm]{Lemma}
\theoremstyle{break}
\theoremstyle{definition}
\newtheorem{defi}[thm]{Definition}
\newtheorem{ex}[thm]{Example}
\theoremstyle{exampl}
\theoremstyle{remark}
\newtheorem{rem}[thm]{Remark}
\def\R{\mathbb{R}}
\def\N{\mathbb{N}}
\def\state{E}
\DeclareMathOperator{\E}{\mathbb E}
\definecolor{mygray}{gray}{.5}
\title{On Finding Equilibrium\ Stopping Times for Time-Inconsistent Markovian Problems}
\author{S\"oren Christensen\footnote{Department of Mathematics, SPST, University of Hamburg, Bundesstr. 55, D-20146 Hamburg, Germany. E-mail address: soeren.christensen@uni-hamburg.de} \and Kristoffer Lindensj\"o\footnote{Department of Mathematics, Stockholm University, SE-106 91 Stockholm, Sweden. E-mail address: kristoffer.lindensjo@math.su.se.}}
\date{November, 2018}
\begin{document}
\maketitle

\begin{abstract}
Standard Markovian optimal stopping problems are consistent in the sense that the first entrance time into the stopping set is optimal for each initial state of the process. 
Clearly, the usual concept of optimality cannot in a straightforward way be applied to non-standard stopping problems without this time-consistent structure. This paper is devoted to the solution of time-inconsistent stopping problems with the reward depending on the initial state using an adaptation of Strotz's consistent planning. More precisely, we give a precise equilibrium definition --- of the type subgame perfect Nash equilibrium based on pure Markov strategies. 
In general, such equilibria do not always exist and if they exist they are in general not unique. We, however, develop an iterative approach to finding equilibrium stopping times for a general class of problems and apply this approach to one-sided stopping problems on the real line.  We furthermore prove a verification theorem based on a set of variational inequalities which also allows us to find equilibria. In the case of a standard optimal stopping problem, we investigate the connection between the notion of an optimal and an equilibrium stopping time. As an application of the developed theory we study a selling strategy problem under exponential utility and endogenous habit formation.
\end{abstract}

\noindent \textbf{Keywords:} Markov process, Optimal stopping, Subgame perfect Nash equilibrium, Strotz's consistent planning, Time-inconsistency, Variational inequalities.
\vspace{1mm}
 
\noindent \textbf{AMS MSC2010:}  
60G40; 60J70; 91A10; 91A25; 91G80; 91B02; 91B51.

\section{Introduction} \label{introKri}
Consider a Markovian process $X$ with state space $E \subseteq \mathbb{R}^d$ and the problem of choosing a stopping time $\tau$ in order to maximize the expected discounted reward 
\begin{align} \label{intro-eq}
\E_x(e^{-r\tau}F(X_\tau,x)), \enskip \textrm{for each current state $x \in E$}.
\end{align}
Note that the dependence of the reward $F(X_\tau,x)$ on the current state $x$ implies that this is not a standard optimal stopping problem. Specifically, the problem is inconsistent in the sense that we cannot generally expect the existence of an optimal stopping time that is independent of the current state $x$. In other words, if an optimal stopping time is optimal for the current state $x$, then it will generally not be optimal at a later time, call it $t$,  after adjusting the reward, since the then current state $X_t$ will typically be different from $x$. Optimal stopping and more general optimal control problems with this property are  called \emph{time-inconsistent}.

It is clear that the time-inconsistency implies that the usual notion of optimality cannot be applied straightforwardly --- it must first be clarified how the time-inconsistent stopping problem should be interpreted. One way of dealing with this issue is of course to treat the problem as a parametrized, by the current state $x$, optimal stopping problem and ignore the issue that  
the corresponding optimal 
stopping time will not generally be optimal at later times. In the literature this is known as a \emph{pre-commitment} strategy. In the present paper we instead interpret the time-inconsistent stopping problem using a game-theoretic approach where we let each state $x$ correspond to an agent, who all  play a sequential game against each other regarding when to stop the process $X$ --- and then we look for equilibrium strategies i.e. \emph{equilibrium stopping times}. The type of equilibria we consider are subgame perfect Nash equilibria based on pure Markov strategies (known as pure Markov perfect equilibria). Subgame perfect Nash equilibrium is a refinement of the notion of Nash equilibrium for dynamic games suggested by Selten, see e.g. \cite{selten1965spieltheoretische,selten1975reexamination}, relying on the concept of \emph{consistent planning} of Strotz \cite{strotz}.  We remark that although Strotz \cite{strotz} studied dynamic utility maximization (a control problem) under inconsistency essentially due to the reward being dependent on the current time, the essential idea of the present paper relies on the creation of Strotz, although our presentation relies on inconsistency due to the space variable.   

In game-theory strategies are either pure or mixed. In the present paper we consider only pure stopping strategies, which we define as entry times into sets in the state space, see Remark \ref{motiv-eq-def} for a motivation. In \cite{christensen2018time} we consider mixed strategies for time-inconsistent stopping problems of a different class compared to the present paper. Remark \ref{motiv-eq-def} contains an explanation of some of the game-theory terms used in the present paper.

The structure of the paper is as follows. In Section \ref{sec:prev-lit} we describe previous literature related to time-inconsistent problems. In Section \ref{sec:problem-formulation} we formulate the general time-inconsistent stopping problem introduced above in more detail and define the notions of pure Markov strategies and subgame perfect Nash equilibria in this setting. Here we also show that optimal stopping times for standard (time-consistent) stopping problems are equilibrium stopping times, and that the reverse holds for one-dimensional absorbed Wiener processes (a similar result for diffusions is presented in Section \ref{sec:time-incon-var-eq}). We also present an example with two essentially different equilibria and an example which proves that an equilibrium (of the type we consider) does not always exist. In Section \ref{sec:forward_iteration} we develop an iterative approach to finding equilibrium stopping times in a general setting under certain assumptions. As an application of this iterative approach we, in Section \ref{sec:one-sided}, study a class of one-sided problems on the real line. In Section \ref{sec:time-incon-var-eq} we present a verification theorem for time-inconsistent optimal stopping based on a set of variational inequalities that we call the \emph{time-inconsistent variational inequalities}. Illustrative examples are studied in Sections \ref{sec:forward_iteration}, \ref{sec:one-sided}, and \ref{sec:time-incon-var-eq}. In Example \ref{eq-selling-strat} we apply the verification theorem to find equilibrium selling strategies for an investor with  exponential utility and endogenous habit formation.

\subsection{Previous literature}\label{sec:prev-lit}
Time-inconsistency in financial economics typically arises for either of the following reasons: 
\begin{enumerate}[label=(\roman*)]
\item Endogenous habit formation, \label{endog-hab}
\item Non-exponential discounting, \label{non-exp}
\item Mean-variance utility. \label{mean-var}
\end{enumerate}
{Stopping problems with \ref{endog-hab} and \ref{non-exp} can be formulated and studied in the framework of the present paper whereas stopping problems of type \ref{mean-var} can be dealt with in the framework studied in \cite{christensen2018time}. Stopping problems with \ref{non-exp}---\ref{mean-var} are described below. A stopping problem with \ref{endog-hab} is studied in Example \ref{eq-selling-strat}. See also \cite{tomas-disc} for a short description of \ref{endog-hab}---\ref{mean-var}.}

There is a substantial financial economics literature that studies specific time-inconsistent problems; in either continous or discrete time, for either stochastic or deterministic models, and using either game-theoretic or pre-commitment approaches. Historically important papers include \cite{goldman1980consistent,peleg1973existence,pollak1968consistent,strotz}. We remark that most of these papers consider problems of control type. Papers in financial economics studying time-inconsistent stopping problems include \cite{barberis2012model,ebert2015until,ebert2017discounting,grenadier2007investment}.

As mentioned above, the first paper to use a game-theoretic approach --- essentially based on subgame perfect Nash equilibria --- to time-inconsistency, there termed \emph{consistent planning}, was \cite{strotz}, where a deterministic problem under non-exponential discounting in discrete time is studied. Further financial economics research in this direction can be found in \cite{barro1999ramsey,goldman1980consistent,krusell2003consumption,peleg1973existence,pollak1968consistent}.

Early papers of a more mathematical kind to consider the game-theoretic approach --- based on subgame perfect Nash equilibria --- in time-inconsistent problems in continuous time are \cite{ekeland-arxiv,Ekeland}, who study optimal consumption and investment under non-exponential (hyperbolic) discounting. Inspired by the approach of e.g. \cite{ekeland-arxiv,Ekeland}, the first papers to develop a general mathematical theory for finding subgame perfect Nash equilibria for time-inconsistent stochastic control problems in Markovian models are \cite{tomas-continFORTH,tomas-disc}. The main feature of that theory is a generalization of the standard HJB equation called the extended HJB system and the main result is a verification theorem saying that if a solution  to the extended HJB system exists then it corresponds to an equilibrium. In \cite{lindensjo2017timeinconHJB} it is shown that a \emph{regular} equilibrium is necessarily a solution to an extended HJB system. Other papers studying specific time-inconsistent control problems from a more mathematical perspective include \cite{tomas-mean,Czichowsky,ekeland2012time,hu2012time,li2002dynamic}.

Papers of a more mathematical kind to study time-inconsistent stopping include \cite{huang2017time} who study a stopping problem with non-exponential discounting and \cite{miller2017nonlinear,pedersen2016optimal} (see the discussion below).  In \cite{bayraktar2018} a game-theoretic approach inspired by Strotz's consistent planning is used to study a time-inconsistent stopping problem with a mean standard deviation criterion in discrete time. 
In \cite{christensen2013american} a class of stopping problems -- which can be seen as American options with guarantee -- with the reward depending on the initial state are studied using a pre-commitment approach. 
We refer to \cite{tomas-disc,huang2017time,miller2017nonlinear,pedersen2016optimal} for short surveys of the literature on time-inconsistent problems. 

Endogenous habit formation problems (see Example \ref{eq-selling-strat}) are time-inconsistent because the reward depends on the current state. Stopping problems of this kind can therefore be studied in the framework of the present paper. A version of the non-exponential discounting stopping problem corresponds to maximizing
	\begin{equation} \label{noe-exp-eq}
	\E_{t,x}(\delta(\tau-t)\tilde F(X_\tau))
	\end{equation}
with respect to stopping times $\tau$, where the discounting function $\delta:[0,\infty)\rightarrow [0,1]$ is a decreasing (non-exponential) function satisfying $\delta (0)=1$. Problem \eqref{noe-exp-eq} can in our framework be obtained by letting one of the dimensions of $X$ correspond to time, i.e., by considering the time-space process.

Mean-variance problems are, however, time-inconsistent for the fundamentally different reason that the expression to be maximized is a non-linear function of the expected value of a reward. Hence, mean-variance problems cannot be studied in the present framework 
(a mean-variance problem is however studied in \cite{christensen2018time}). A version of the mean-variance stopping problem is to find a stopping time $\tau$ that maximizes
\begin{equation} \label{mean-var-eq}
\E_{x}(X_\tau)- cVar_{x}(X_\tau),  \enskip \mbox{ where $c>0$ is a fixed constant.} 
\end{equation}
In \cite{pedersen2016optimal}, this mean-variance stopping problem is studied for an underlying geometric Brownian motion (i.e. a mean-variance selling problem in a Black-Scholes market). The problem is interpreted and solved in two different ways, by the introduction of two different definitions of optimality. 
\emph{Static optimality}, corresponds to finding, for a fixed $x>0$, a stopping time that maximizes \eqref{mean-var-eq}. The static optimality definition corresponds to a pre-commitment approach. 
\emph{Dynamic optimality},  corresponds to finding a stopping time $\tau^*$ such that there is no other stopping time $\sigma$ with $\mathbb{P}_x(\E_{X_{\tau^*}}(X_\sigma)- cVar_{X_{\tau^*}}(X_\sigma)>X_{\tau^*})>0$ for some $x>0$. This is a novel interpretation of time-inconsistent problems, that does not rely on game-theoretic arguments, see, however, Remark \ref{rem:} below. 
We remark that the concept of dynamic optimality is applicable also to the time-inconsistent stopping problem considered in the present paper, as well as to time-inconsistent stochastic control problems (both of the type considered in the present paper, cf. \ref{endog-hab} and \ref{non-exp}, and of the non-linear type, cf. \ref{mean-var}), see \cite{pedersen2018constrained,pedersen2017optimal}. These references contain the first known time-consistent strategies that are optimal for constrained mean-variance portfolio selection problems in continuous time --- we also remark that there are no known subgame perfect Nash equilibrium strategies for these constrained problems, although \cite{tomas-mean} studies an unconstrained version.
A crucial difference between the game-theoretic approach based on Strotz's consistent planning and the dynamic optimality approach is that the game-theoretic equilibrium solution can be interpreted as the best control among those that will actually be used in the future, while the dynamic optimality solution can be interpreted as being the best with respect to all present states.
We refer to \cite{pedersen2016optimal} for a  further discussion of the difference between these two different approaches, see in particular the paragraph before \cite[Example 9]{pedersen2016optimal}.
We remark that \cite{pedersen2016optimal} contains also a subgame perfect Nash equilibrium approach (based on Strotz's idea) for stopping problems, see mainly \cite[Example 9]{pedersen2016optimal}; this point is elaborated in the paragraph before Example \ref{ex-multiple-eq} below. Time-inconsistent stopping problems with more general non-linear functions of the expected reward  are studied in \cite{miller2017nonlinear} using an approach which is inspired by \cite{pedersen2016optimal}.

\section{Problem formulation} \label{sec:problem-formulation}
On the filtered probability space $(\Omega,\mathcal{F},(\mathcal{F}_t)_{t\geq 0},\mathbb{P}_x)$ we consider a strong Markov process $X=(X_t)_{t\geq 0}$ taking values in $(E,\mathcal{B})$ where $E\subseteq \R^d$ and $\mathcal{B}$ is the corresponding Borel $\sigma$-algebra and $X_0=x\in E$. We assume that the filtration satisfies the usual conditions and $X$ to have c\'adl\'ag sample paths and to be quasi left continuous and that $x \mapsto  \mathbb{P}_x(F)$ is measurable for each $F \in \mathcal{F}$. The associated expectations are denoted by $\mathbb{E}_x$. Without loss of generality we assume that $(\Omega,\mathcal{F})$ equals the canonical space so that the shift operator $\theta$ given by $\theta_t(\omega)(s)= \omega(t+s)$ for $\omega = (\omega(t))_{t\geq0} \in \Omega$ and $t,s\geq0$ is well-defined. The class of stopping times with respect to $(\mathcal{F}_t)_{t\geq 0}$ is denoted by $\mathcal{M}$.

Consider a function $F: E\times E \rightarrow \mathbb{R}$  
and the problem of finding a stopping time $\tau$ such that it maximizes, over  the class of stopping times $\mathcal{M}$,
\[J_{\tau}(x):=\E_x\left(e^{-r\tau}F(X_\tau,x)\textbf{1}_{\{\tau < \infty\}}\right), \enskip \textrm{for each } x\in E,\]
where $r\geq 0$ is a constant and --- to guarantee that all expectations are well-defined --- the function $F(\cdot,y)$ is measurable and bounded from below for each fixed $y \in E$. For the ease of exposition we will in the rest of the paper not explicitly write out indicator functions of the type $\textbf{1}_{\{\tau < \infty\}}$ in expected values, but instead implicitly assume that they are there.

The difference in our formulation to usual Markovian optimal stopping problems is that the reward {$F(X_\tau,x)$} explicitly depends on the initial state $X_0=x$. In the standard formulation, the reward {$F(X_\tau,x)$} is independent of $x$. In that classical case, it is well-known that --- under minimal assumptions --- an optimal stopping time is Markovian in the sense that it is a first entrance time into the stopping set, see, e.g., \cite[I.2.2]{peskir2006optimal}. In particular, this solution is consistent meaning that one rule is optimal for each initial state, {i.e. such problems are consistent with  Wald-Bellman's principle of optimality.}

This kind of consistency can of course not be expected in our formulation. We therefore have to be careful how to {reinterpret} the concept of optimality. Clearly, we could choose different stopping times for different starting points $x$. This, however, does not represent the following interpretation of our problem:
 
 We interpret the time-inconsistent stopping problem above as a stopping problem for a person whose preferences, identified with the reward function $F(\cdot,x)$, change as the state $x$ changes. Based on this we think of the person as comprising versions of herself, one version for each state $x$. These versions of the person can then be thought of as agents who play a {sequential} game against each other, where the game regards when to stop the process $X$. {Note that the number of players in this game is generally uncountable.} Each agent, i.e. each $x$-version of the person, then has the possibility, at $x$, to either stop, or not stop. A reasonable definition of an equilibrium strategy, in this case an \emph{equilibrium stopping time} $\hat\tau$, should therefore be such that the following holds:

Under the assumption that each other version of the person uses $\hat\tau$ then, 

\begin{enumerate}[label=(\roman*)]
\item no $x$-version of the person wants to stop  in her state before $\hat\tau$, and 

\item no $x$-version of the person wants to  continue for an "infinitesimal" time if $\hat\tau$ calls for stopping.
\end{enumerate}

We thus define an equilibrium stopping time $\hat\tau$ using conditions which guarantee that no agent {wants} to deviate from $\hat\tau$.  Furthermore, we demand that the decision whether to stop or not should depend directly only on the preferences of each agent $x$ and not, for example, on the outcome of some randomization procedure, or on events from the past  --- that is, we consider pure stopping strategies cf. Definition \ref{def:pure_strat} and Remark \ref{motiv-eq-def} (while mixed stopping strategies are considered in \cite{christensen2018time}). These conditions are, in reverse order, formalized in the following definitions.

\begin{defi} \label{def:pure_strat} A stopping time $\tau \in \mathcal{M}$ is said to be a pure Markov strategy stopping time if it is the entrance time of the state process into a set in the state space, more specifically, if $\tau=\inf\{t\geq 0: X_t \in S\}$ for some measurable $S \subseteq E$. Denote the set of such stopping times by $\mathcal{N}$.
\end{defi}

\begin{defi}\label{def:equ_stop_time} A stopping time $\hat\tau\in\mathcal{N}$ is said to be a (pure Markov strategy) equilibrium stopping time if, for all $x\in \state$,
\begin{align}
J_{\hat\tau}(x)-F(x,x)&\geq 0, \enskip \textrm{ and}\label{eqdef1}\\
\liminf_{h\searrow 0}\frac{J_{\hat\tau}(x)-J_{\hat\tau\circ \theta_{\tau_h}+\tau_h}(x)}{\E_x(\tau_h)}&\geq 0,\label{eqdef2}
\end{align}
where $\tau_h=\inf\{t\geq 0:|X_t-X_0|\geq h\}$.
\end{defi}
\begin{defi} \label{def:equ_func} If ${\hat\tau}$ is a (pure Markov strategy) equilibrium stopping time then the function $J_{\hat\tau}(x),\, x \in E$, is said to be the (pure Markov strategy) equilibrium value function corresponding to ${\hat\tau}$. The function 
\[f_{\hat\tau}(x,y): = \E_{x}(e^{-r\hat\tau}F(X_{\hat\tau},y)), \enskip (x,y) \in E \times E\]
is said to be the auxiliary function corresponding to ${\hat\tau}$.
\end{defi}
It follows that the equilibrium value function satisfies
\[J_{\hat\tau}(x) = f_{\hat\tau}(x,x) = \E_{x}(e^{-r\hat\tau}F(X_{\hat\tau},x)), \enskip x \in E.\]

This paper is devoted to the question of how to find equilibrium stopping times ({of the type in} Definition \ref{def:equ_stop_time}). 

The interpretation of \eqref{eqdef1} is that each $x$-agent should prefer the equilibrium strategy over stopping directly. The interpretation of \eqref{eqdef2} is that each $x$-agent should prefer the equilibrium strategy over not stopping on the short (stochastic) time interval $[0,\tau_h)$, over which we interpret the $x$-agent as being in charge, given that the equilibrium strategy is played from $\tau_h$ and onwards --- here we remark that the numerator in \eqref{eqdef2} can in principle be negative for each $h>0$ and still comply with condition \eqref{eqdef2} by vanishing with order $\E_x(\tau_h)$.

\begin{rem} \label{rem-defNEW}
The equilibrium definition (Definition \ref{def:equ_stop_time}) is essentially an adaptation of Strotz's consistent planning (subgame perfect Nash equilibrium) approach to the type of stopping problems studied in the present paper. The definition is also inspired by similar definitions for time-inconsistent stopping problems in financial economics, see \cite{ebert2017discounting}. The definition can also be seen as an adaptation of the subgame perfect Nash equilibrium for time-inconsistent stochastic control problems (which is itself an adaptation of Strotz's consistent planning), see e.g. \cite{tomas-continFORTH,tomas-disc} and the references therein, when identifying stopping times with binary controls (this is also noted in \cite[Section 3.2]{ebert2017discounting}). We remark that a similar identification is used in \cite[Example 9]{pedersen2016optimal} where a subgame perfect Nash equilibrium approach to stopping problems is also studied (see the paragraph before Example \ref{ex-multiple-eq} below for further details). 
\end{rem}

\begin{rem} \label{motiv-eq-def} Let us informally describe some of the game theoretic jargon used above, for a reference see e.g. \cite{maskin2001markov}. A Markov strategy depends on past events that are payoff-relevant. Markov strategies can be pure or mixed. A pure strategy is one that determines the actions of the agents without randomization. In our setting, the actions of the agents are to stop or not to stop, hence first entrance times correspond to pure strategies. A mixed strategy is one that randomly selects pure strategies. In our situation, this could be realized by extending the underlying filtration in a suitable way and consider general stopping times with respect to this filtration.
See Example \ref{mixedSec} below for an illustration. A subgame perfect Nash equilibrium is a strategy that forms a Nash equilibrium at any time $t$, and a Markov perfect equilibrium is a subgame perfect Nash equilibrium in which all players use Markov strategies. Thus, Definition \ref{def:equ_stop_time} corresponds to a subgame perfect Nash equilibrium, and more specifically a pure Markov perfect equilibrium.
\end{rem}

If the reward function $F(x,y)$ does not depend on $y$, then our time-inconsistent stopping problem is a standard (time-consistent) stopping problem corresponding to
\begin{align} \label{newresA0}
\E_x(e^{-r\tau}F(X_\tau)).
\end{align}
It is now natural to ask: \emph{is the equilibrium value function of the standard stopping problem corresponding to \eqref{newresA0} uniquely given by the optimal value function for \eqref{newresA0}?}
 
We will answer this question as follows: Theorem \ref{newresA} shows that if an optimal stopping time for the standard stopping problem \eqref{newresA0} exists, then the corresponding optimal value function is also an equilibrium value function for \eqref{newresA0}. Theorem \ref{newresA2}  and Theorem \ref{newresB} show that the reverse holds for some cases.
\begin{thm} \label{newresA} 
An optimal stopping time for the standard stopping problem \eqref{newresA0} is an equilibrium stopping time for \eqref{newresA0}.
\end{thm}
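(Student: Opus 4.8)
The plan is to exploit the fact that, when $F$ does not depend on its second argument, the functional $J_{\tau^*}$ coincides with the optimal value function $V(x):=\sup_{\tau\in\mathcal{M}}\E_x(e^{-r\tau}F(X_\tau))$, and that the standard structural results for optimal stopping supply the two facts I need: that an optimal $\tau^*$ is the first entrance time into the stopping set $S=\{x:V(x)=F(x)\}$ (so that $\tau^*\in\mathcal{N}$ and is eligible as an equilibrium candidate in the sense of Definition \ref{def:equ_stop_time}), and that $V$ is $r$-excessive, i.e. $e^{-rt}V(X_t)$ is a supermartingale so that $V(x)\geq\E_x(e^{-r\tau}V(X_\tau))$ for every stopping time $\tau$. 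Both defining inequalities should reduce to these facts, with \eqref{eqdef2} in fact holding term-by-term rather than only in the limit.

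First I would dispatch \eqref{eqdef1}. Since $J_{\tau^*}(x)=V(x)$ and $V$ is a supremum over $\mathcal{M}$, the admissible choice $\tau\equiv 0$ gives $V(x)\geq\E_x(F(X_0))=F(x)=F(x,x)$, whence $J_{\tau^*}(x)-F(x,x)\geq 0$. The core of the argument is \eqref{eqdef2}. Writing $\sigma:=\tau^*\circ\theta_{\tau_h}+\tau_h$, I would first note the path identities $X_\sigma=X_{\tau^*}\circ\theta_{\tau_h}$ and $e^{-r\sigma}=e^{-r\tau_h}\,(e^{-r\tau^*}\circ\theta_{\tau_h})$. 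Then, pulling out the $\mathcal{F}_{\tau_h}$-measurable factor $e^{-r\tau_h}$, conditioning on $\mathcal{F}_{\tau_h}$ and applying the strong Markov property at $\tau_h$ yields
\[
J_\sigma(x)=\E_x\big(e^{-r\tau_h}\,\E_{X_{\tau_h}}(e^{-r\tau^*}F(X_{\tau^*}))\big)=\E_x\big(e^{-r\tau_h}V(X_{\tau_h})\big),
\]
where the second equality uses that $\tau^*$, being entrance into the fixed set $S$, is optimal from every starting point and hence attains $V$ everywhere. Consequently the numerator in \eqref{eqdef2} equals $V(x)-\E_x(e^{-r\tau_h}V(X_{\tau_h}))$, which is nonnegative for every $h>0$ by the $r$-excessivity (supermartingale property) of $V$ applied to the stopping time $\tau_h$. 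Since each member of the quotient is then a nonnegative numerator divided by the positive denominator $\E_x(\tau_h)$, the $\liminf$ is automatically $\geq 0$, establishing \eqref{eqdef2}.

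The steps needing the most care are the strong Markov manipulation giving $J_\sigma(x)=\E_x(e^{-r\tau_h}V(X_{\tau_h}))$ — in particular the correct handling of the shift operator through $X_\sigma=X_{\tau^*}\circ\theta_{\tau_h}$ and the identity $\E_y(e^{-r\tau^*}F(X_{\tau^*}))=V(y)$ for all $y\in E$ — and the invocation of the right regularity of $V$, namely that as the smallest $r$-excessive majorant of $F$ it makes $e^{-rt}V(X_t)$ a supermartingale. I expect the supermartingale inequality to do all the real work: once it is in place, \eqref{eqdef2} holds for each fixed $h$, and no genuine limiting argument is required. The only residual point to confirm is that $\E_x(\tau_h)>0$ for small $h$ (so the quotient is well defined), which holds at every non-absorbing state since $\tau_h>0$ almost surely for $h>0$.
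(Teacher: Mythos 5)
Your proposal is correct, and its overall skeleton matches the paper's: verify \eqref{eqdef1} from optimality exactly as you do, and establish \eqref{eqdef2} by showing the numerator is nonnegative for \emph{every} $h>0$, so that no genuine limiting argument is needed. Where you diverge is in how that nonnegativity is obtained. The paper's proof is a one-liner: $\sigma:=\tau\circ\theta_{\tau_h}+\tau_h$ is itself a stopping time in $\mathcal{M}$, and the optimal value function dominates $J_\sigma(x)$ purely by definition of the supremum --- no Markovian structure of $V$ is invoked at all. You instead compute $J_\sigma(x)$ exactly via the strong Markov property, obtaining $J_\sigma(x)=\E_x\bigl(e^{-r\tau_h}V(X_{\tau_h})\bigr)$, and then conclude via $r$-excessivity (the supermartingale property) of $V$. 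This is valid, but it imports two extra standard-theory inputs that the paper's argument does not need: that $\tau^*$ attains $V$ from every starting state (in this paper's formulation that is in fact built into the definition of optimality, since the problem demands maximization for each $x\in E$ simultaneously, so the entrance-time structural result is not strictly required for this step), and that $e^{-rt}V(X_t)$ is a supermartingale with optional sampling applicable at $\tau_h$, which carries its own regularity caveats. What your route buys in exchange is the explicit identity $J_\sigma(x)=\E_x\bigl(e^{-r\tau_h}V(X_{\tau_h})\bigr)$ --- precisely the Dynkin-type formula the paper later uses in its heuristic derivation of the time-inconsistent variational inequalities --- and, to your credit, you explicitly address the point that $\tau^*$ must lie in $\mathcal{N}$ (required by Definition \ref{def:equ_stop_time}) via the first-entrance-time characterization, a membership issue the paper's proof leaves implicit.
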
 
\begin{proof}
If $\tau$ is an optimal stopping time in \eqref{newresA0} then, trivially, the corresponding optimal value function satisfies $J_{\tau}(x)  \geq F(x)$, which means that equilibrium condition \eqref{eqdef1} is satisfied.  To see that also equilibrium condition \eqref{eqdef2} is satisfied note that, trivially, $J_{\tau}(x) \geq J_{\tau\circ \theta_{\tau_h}+\tau_h}(x)$, which means that the numerator in \eqref{eqdef2} is non-negative, for each $h$, and hence that condition \eqref{eqdef2} holds. It follows that $\tau$ is an equilibrium stopping time, by Definition \ref{def:equ_stop_time}.
\end{proof}

\begin{thm} \label{newresA2} 
Suppose that $E=[0,1]$ and $X$ is a Wiener process absorbed at $0$ and $1$, and $r=0$. Suppose that an equilibrium stopping time $\hat\tau$ for the standard stopping problem \eqref{newresA0} exists and that the equilibrium value function $J_{\hat \tau}(x)=\E_x(F(X_{{\hat \tau}}))$ is continuous. Then, $\hat\tau$ is also an optimal stopping time for \eqref{newresA0}.
\end{thm}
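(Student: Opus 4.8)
The plan is to show that the equilibrium value function $J := J_{\hat\tau}$ is a concave majorant of $F$, and then to invoke the classical characterization of the optimal value function of the absorbed Wiener process as the least concave majorant of $F$. Since $J$ is simultaneously a concave majorant of $F$ and the value of the admissible rule $\hat\tau$, it must coincide with the optimal value function, which forces $\hat\tau$ to be optimal for \eqref{newresA0}.

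First I would rewrite the two equilibrium conditions in the present setting. Condition \eqref{eqdef1} gives $J(x)\geq F(x)$ for all $x$, so $J$ majorizes $F$. For \eqref{eqdef2}, since $r=0$ and $F$ does not depend on its second argument, the strong Markov property yields $J_{\hat\tau\circ \theta_{\tau_h}+\tau_h}(x)=\E_x(J(X_{\tau_h}))$. For an interior point $x\in(0,1)$ and $h<\min(x,1-x)$, the stopped value $X_{\tau_h}$ equals $x+h$ or $x-h$ with probability $1/2$ each, and $\E_x(\tau_h)=h^2$. Hence \eqref{eqdef2} reads
\[
\liminf_{h\searrow 0}\frac{J(x)-\tfrac12\big(J(x+h)+J(x-h)\big)}{h^2}\geq 0,
\]
which is equivalent to the upper second symmetric derivative condition $\limsup_{h\searrow 0}\frac{J(x+h)+J(x-h)-2J(x)}{h^2}\leq 0$ at every $x\in(0,1)$.

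The main step, and the one I expect to be the real obstacle, is to upgrade this pointwise, one-sided ``concavity in the limit'' condition into genuine global concavity of the continuous function $J$; the difficulty is that the symmetric second difference is controlled only in $\limsup$ and only by $\leq 0$ (not strictly), so a maximum-principle argument does not apply directly. I would overcome this by a quadratic perturbation: for $\epsilon>0$ set $J_\epsilon(x)=J(x)-\epsilon x^2$, so that the upper second symmetric derivative of $J_\epsilon$ is bounded above by $-2\epsilon<0$ everywhere on $(0,1)$. If $J_\epsilon$ failed to be concave on some $[a,b]\subset(0,1)$, then $\ell-J_\epsilon$, with $\ell$ the chord of $J_\epsilon$ over $[a,b]$, would attain a strictly positive maximum at an interior point $x^*$; comparing $J_\epsilon(x^*\pm h)$ with the affine $\ell$ at a maximum gives $J_\epsilon(x^*+h)+J_\epsilon(x^*-h)-2J_\epsilon(x^*)\geq 0$ for all small $h$, contradicting the strict negativity of the upper second symmetric derivative at $x^*$. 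Thus each $J_\epsilon$ is concave, and letting $\epsilon\searrow 0$ shows that $J$ is concave on $(0,1)$, hence on $[0,1]$ by continuity.

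Finally I would conclude optimality. Since $X$ is a bounded martingale ($r=0$) and $J$ is concave and bounded (being continuous on $[0,1]$), Jensen's inequality shows that $J(X_t)$ is a supermartingale; optional sampling then gives $\E_x(F(X_\tau))\leq \E_x(J(X_\tau))\leq J(x)=J_{\hat\tau}(x)$ for every stopping time $\tau$, so that $\hat\tau$ is optimal. Equivalently, one may argue that the optimal value function $V$ of the absorbed Wiener process is the least concave majorant of $F$: as $J$ is a concave majorant we get $J\geq V$, while $J=J_{\hat\tau}\leq V$ trivially, whence $J=V$ is attained by $\hat\tau$. The only remaining care is the routine passage to the limit in optional sampling and the handling of $\{\tau=\infty\}$, using that the absorbed process converges a.s. and $J$ is bounded.
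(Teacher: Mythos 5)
Your proposal is correct and follows essentially the same route as the paper's proof: both translate condition \eqref{eqdef2}, via the strong Markov property and the exit probabilities $\tfrac12,\tfrac12$ with $\E_x(\tau_h)=h^2$, into a sign condition on the symmetric second difference of $J_{\hat\tau}$ at interior points, deduce concavity of $J_{\hat\tau}$, and conclude optimality from the characterization of the value function of the absorbed Wiener process as the least concave (equivalently, superharmonic) majorant of $F$. The only difference is that where the paper invokes a real-analysis lemma (Babcock, Zygmund) to pass from the second-symmetric-derivative condition to concavity, you prove that step directly with the quadratic perturbation $J-\epsilon x^2$ and the chord-maximum argument, which is precisely the standard proof of the cited lemma.
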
 
\begin{proof} 
By definition the equilibrium value function is given by $J_{\hat \tau}(x)=\E_x(F(X_{{\hat \tau}}))$ where ${\hat \tau}$ is the entry time into some set in the state space. By condition \eqref{eqdef1} it holds that $J_{\hat \tau}(x)$ dominates the reward function $F(x)$. Under the stated assumptions, superharmonicity is equivalent to concavity. Hence, if we can prove that $J_{\hat \tau}(x)$ is a concave function then it follows that it is also a minimal dominating superharmonic function and, by the standard theory, that ${\hat \tau}$ is an optimal stopping time.

Use the strong Markov property and basic properties of the Wiener process to find that condition \eqref{eqdef2} can for $x \in (0,1)$ be written as 
\begin{align}
\liminf_{h\searrow 0}\frac{J_{\hat\tau}(x)- \frac{1}{2}J_{\hat\tau}(x+h)-\frac{1}{2}J_{\hat\tau}(x-h)}{h^2}&\geq 0. 
\end{align}
This implies that
\begin{align}
\limsup_{h\searrow 0}\frac{-J_{\hat\tau}(x+h)-J_{\hat\tau}(x-h) +2J_{\hat\tau}(x) }{h^2}&\geq 0. \label{newresaug18-1}
\end{align}
By a result from real analysis, see e.g. \cite[Lemma 4.17]{babcock1975properties} or \cite{zygmund2002trigonometric}, it follows, from \eqref{newresaug18-1}, that the function $-J_{\hat\tau}(x)$ is convex (on $(0,1)$), and hence $J_{\hat\tau}(x)$ is concave.

\end{proof}

In Example \ref{ex-multiple-eq} we present an example with multiple equilibria and in Example \ref{mixedSec} we present an example with no equilibrium. Another example of a stopping problem with multiple equilibria is presented in \cite[Example 9]{pedersen2016optimal}. There, however, a different interpretation of the notion of a subgame perfect Nash equilibrium for stopping problems, compared to the present paper, 
	is used as our condition \eqref{eqdef2} does not become relevant
 and the players are identified with the time-coordinates of the time-space process.

\begin{ex} \label{ex-multiple-eq}  Let $E=[0,1]$ and $X$ be a Wiener process absorbed at $0$ and $1$. Consider the reward
\begin{equation}
F(x,y)=\begin{cases}
	1, &x \in \{0,1\},\\
	-|x-y| ,& x\in (0,1).
	\end{cases}
	\end{equation}
It is easy to verify that $\hat\tau=\inf\{t\geq 0: X_t \in \{0,1\}\}$ is an equilibrium stopping time: to see this note that, $J_{\hat\tau}(x)= 1 \geq F(x,x)$, which means that \eqref{eqdef1} holds, and $J_{\hat\tau}(x) = 1 \geq J_{\tau}(x)$ for any stopping time $\tau$, which implies that the numerator of \eqref{eqdef2} is non-negative for each $h$, which implies that \eqref{eqdef2} holds. It is also easy to see that $\tilde\tau=0$ is an equilibrium stopping time: Condition \eqref{eqdef1} holds trivially. Moreover, {for $x\in (0,1)$,} it holds
		that $J_{\tilde\tau}(x) = 0 \geq J_{\tilde\tau\circ \theta_{\tau_h}+\tau_h}(x)$ for sufficiently small $h$ (i.e. immediate stopping is better than continuing a short while), which means that \eqref{eqdef2} holds. For $x\in \{0,1\}$, \eqref{eqdef2} is easily verified. The corresponding equilibrium value functions are given by
$J_{\hat\tau}(x) = 1 $ for $x \in E$, and 
$J_{\tilde\tau}(x) = 0 $ for $x\in (0,1)$, $J_{\tilde\tau}(x) = 1$ for $ x\in \{0,1\}$.
\end{ex}

\begin{ex}
 \label{mixedSec}
As mentioned above, in standard Markovian optimal stopping problems, 	
we only have to consider
  first entrance times and the filtration generated by $X$.
Moreover, any additional information included in some larger filtration cannot improve the optimal value function 
	as long as the process is Markovian also with respect to the larger filtration.
Similarly, in Markovian Dynkin-type stopping games it is also the case that equilibria can be found (under technical assumptions) as first-entrance times, see \cite{ekstrom2008optimal}. This is however not the case for equilibrium stopping problems in general as we will see in the following example.

Consider a discrete time process $X$ that lives on the state space $E=\{\partial_1, a,b,\partial_2\}\subseteq \R$ where $\partial_1$ and $\partial_2$ are absorbing states and \[\mathbb{P}_{a}(X_1=\partial_1)=\mathbb{P}_{a}(X_1=b) =\mathbb{P}_{b}(X_1=a)=\mathbb{P}_{b}(X_1=\partial_2)=\frac{1}{2}.\]
Let $r=0$ and define $X_\infty= \lim_{t\rightarrow \infty}X_t$.

\begin{center}
	\begin{tikzpicture}[->,>=stealth',shorten >=1pt,auto,node distance=2.8cm,semithick]
	\tikzstyle{every state}=[fill=white,draw=black,thick,text=black,scale=1]
	\node[state]         (A){$\partial_1$};
	\node[state]         (B) [right of=A] {$a$};
	\node[state]         (C) [right of=B] {$b$};
	\node[state]         (D) [right of=C] {$\partial_2$};
	\path (B) edge  [bend left] node[below] {$1/2$} (A);
	\path (D) edge  [loop above]  (D);
	\path (A) edge  [loop above]  (A);
	\path (B) edge  [bend right] node[below] {$1/2$} (C);
	\path (C) edge  [bend right] node[above] {$1/2$} (B);
	\path (C) edge  [bend left] node[above] {$1/2$} (D);
	\end{tikzpicture}
\end{center}

 ($X$ can of course be embedded into a continuous time Markov chain, so that we do not leave the setting of this paper). 
 Let
\[F(x,a)=\begin{cases}
0,&x= \partial_1\\
1,&x= a\\
3,&x= b\\
0,&x= \partial_2,
\end{cases}\]
\[F(x,b)=\begin{cases}
4,&x= \partial_1\\
0,&x= a\\
1,&x= b\\
0,&x= \partial_2
\end{cases}\]
and {$F(\cdot,\partial_i)=0$}  
for $i=1,2$. We will now show that no entrance time of the state process $X$ into a subset $S \subseteq E$ can be an equilibrium stopping time, i.e. no pure {Markov} strategy equilibrium stopping time exists. We do this by investigating all such stopping sets $S$. Since $\partial_1$ and $\partial_2$ are absorbing we can without loss of generality assume that $\partial_1,\partial_2 \in S$. It remains to consider the following four sets:
 
\begin{enumerate}[label=(\roman*)]
\item $S=\{\partial_1,\partial_2, a,b\}$: This stopping set corresponds to the rule that both agent $a$ and agent $b$ should always stop when they get the chance. But this rule cannot correspond to an equilibrium stopping time, since agent $a$ would obtain $1$ when stopping but she obtains $\frac{1}{2}\cdot0+\frac{1}{2}\cdot3=\frac{3}{2}>1$ (in expectation) if she deviates from the rule by never stopping.

\item $S=\{\partial_1,\partial_2, a\}$:  This stopping set corresponds to the rule that $a$ should stop and $b$ should continue. But this cannot correspond to an equilibrium stopping time, since agent $b$ obtains $\frac{1}{2}\cdot0+\frac{1}{2}\cdot0=0$ when continuing and $1>0$ when stopping.

\item $S=\{\partial_1,\partial_2, b\}$: This stopping set corresponds to the rule that $b$ should stop and $a$ should continue. Let $V_{0,0}(b)$ denote the value that agent $b$ obtains when not following this rule. Then $V_{0,0}(b)=\frac{1}{2}\cdot0 + \frac{1}{2}(\frac{1}{2}\cdot4+\frac{1}{2}V_{0,0}(b))\Rightarrow V_{0,0}(b) = \frac{4}{3}$. Note that agent $b$ obtains $1<V_{0,0}(b)$ when stopping. This means that the set $\{\partial_1,\partial_2, b\}$ cannot be the stopping set of an equilibrium stopping time.

\item\label{item:ex_(iv)} $S=\{\partial_1,\partial_2\}$: This stopping set corresponds to the rule that both $a$ and $b$ should continue. Since agent $a$ obtains zero in the absorbing states she prefers to stop since this gives her $1$. 
\end{enumerate}
The above implies that there is no equilibrium stopping time in the set of pure Markov strategy stopping times. However, a  mixed strategy equilibrium stopping time (in the sense defined below for this example) does exist, as we shall now see. Consider the stopping time $\tau_{p,q}$ defined as follows: for any  $t\in \N_0$ given $\{\tau_{p,q}\geq t\}$, if $X_t\in\{\partial_1,\partial_2\}$ then $\tau_{p,q}=t$, if  $X_t=a$ then $\tau_{p,q}=t$ with probability $p$, and if $X_t=b$ then $\tau_{p,q}=t$ with probability $q$ (assume that the filtration $(\mathcal{F}_t)_{t\geq 0}$ is large enough for $\tau_{p,q}$ be be a stopping time with respect to $(\mathcal{F}_t)_{t\geq 0}$). Heuristically, the stopping time $\tau_{p,q}$ corresponds to the agents $a$ and $b$ flipping biased coins in order to decide whether to stop or not. Let $V_{p,q}(x), x\in\{a,b\},$ denote the (expected) value that agent $x$ obtains when $\tau_{p,q}$ is used.

The following ad hoc definition, which is inspired by \cite{touzi2002continuous}, will be used only in the present example:\\
	A stopping time of the type $\tau_{p,q}$ (defined above) is said to be a mixed strategy stopping time. 
A mixed strategy stopping time $\tau_{p',q'}$ is said to be a mixed strategy equilibrium stopping time  if $V_{p,q'}(a)\leq V_{p',q'}(a)$ for all $p\in[0,1]$ and $V_{p',q}(b)\leq V_{p',q'}(b)$ for all $q\in[0,1]$.

Heuristically, a mixed strategy equilibrium stopping time $\tau_{p',q'}$ is a strategy from which neither agent $a$ (nor $b$) wants to deviate from by choosing another mixed strategy $\tau_{p,q'}$ ($\tau_{p',q}$), i.e. they do not want to deviate by choosing another biased coin (including degenerate biased coins, i.e. with $p,q\in\{0,1\}$).

We obtain
\begin{align*}
V_{p,q}(a) & = p\cdot 1 + 
(1-p)\left[\frac{1}{2}\cdot0 + \frac{1}{2} \left(q\cdot 3+ (1-q)\left(\frac{1}{2}V_{p,q}(a)+\frac{1}{2}\cdot0\right) \right)\right] \Rightarrow \\
V_{p,q}(a) &= \frac{p+\frac{3}{2}(1-p)q}{1-\frac{1}{4}(1-p)(1-q)}, \enskip \textrm{ and}\\
V_{p,q}(b) & = q\cdot 1 + 
(1-q)\left[\frac{1}{2}\cdot0 + \frac{1}{2} \left(p\cdot 0+ (1-p)\left(\frac{1}{2}V_{p,q}(b)+\frac{1}{2}\cdot 4\right) \right)\right] \Rightarrow \\
V_{p,q}(b) &= \frac{q+(1-p)(1-q)}{1-\frac{1}{4}(1-p)(1-q)}.
\end{align*}
Choose $p'=\frac{1}{5}$ and $q'=\frac{3}{5}$, i.e. consider the mixed strategy stopping time $\tau_{\frac{1}{5},\frac{3}{5}}$. The corresponding expected values are $V_{\frac{1}{5},\frac{3}{5}}(a)=V_{\frac{1}{5},\frac{3}{5}}(b)=1$. All we need to do in order to verify that $\tau_{\frac{1}{5},\frac{3}{5}}$ is a mixed strategy equilibrium stopping time, is to check that neither agent $a$ nor agent $b$ wants to deviate from it, i.e. we need to verify that $V_{p,\frac{3}{5}}(a)\leq V_{\frac{1}{5},\frac{3}{5}}(a) =1$ for all $p\in[0,1]$ and that $V_{\frac{1}{5},q}(b)\leq V_{\frac{1}{5},\frac{3}{5}}(b) =1$ for all $q\in[0,1]$. This is easily done as in fact $V_{p,\frac{3}{5}}(a) = V_{\frac{1}{5},q}(b)=1$ for all $p,q\in[0,1]$. It follows that the stopping time $\tau_{\frac{1}{5},\frac{3}{5}}$ is indeed a mixed strategy equilibrium stopping time.
\end{ex}

\section{A forward iteration approach}\label{sec:forward_iteration}
The previous example illustrates that there is no hope to come up with a general method to find equilibrium stopping times {(of the pure Markov strategy type, see Definition \ref{def:equ_stop_time})}. In particular cases, this can however be done.
We now propose an approach for constructing a candidate for an equilibrium stopping time by solving a --- possibly terminating --- sequence of ordinary optimal stopping problems. More precisely, we construct a set $\hat S$ and prove that --- under certain
 assumptions --- the first entrance time $\tau_{\hat S}$ into $\hat S$
   is an equilibrium stopping time. 

To this end, write
\[S_0:=\emptyset,\;\;v_0(x,y):=\sup_{\tau}\E_x(e^{-r\tau}F(X_\tau,y)).\]
and define recursively for all $n\geq 1$
\begin{align*}
S_n&:=\{x\in \state:v_{n-1}(x,x)=F(x,x)\},\\
v_n(x,y)&:=\sup_{\tau\leq \tau_{S_n}}\E_x(e^{-r\tau}F(X_\tau,y)).
\end{align*}
Note that\ $v_n(\cdot,y)$ is the value function of an ordinary optimal stopping problem for the process $X$ absorbed in $S_n$.  
It holds that $S_1,S_2,...$ is an increasing sequence of sets\ and we assume that
\begin{align}	\label{eq:A1}
S_1,S_2,\dots\mbox{ are closed sets.}\tag{A1}
\end{align}

We denote the closure of the union $\bigcup_{n=0}^\infty S_n$ {in $E$} by $\hat S$. Moreover, $v_n$ is decreasing in $n$ and therefore converges to a limit $v_\infty$. By the construction of the problem, it is furthermore natural to assume that
\begin{align}	\label{eq:A2}
v_\infty(x,x)=\sup_{\tau\leq \tau_{\hat S}}\E_x(e^{-r\tau}F(X_\tau,x))\mbox{ for all }x\in \state\tag{A2}.
\end{align}

Our candidate for the equilibrium stopping time is now the first entrance time $\tau_{\hat S}$ into $\hat S$. The heuristic motivation is as follows: In case it is rational for the agent in state $y=x$ to stop immediately in the starting state $X_0=x$ in problem $v_n(x,x)$, $n$ minimal, say, there is no reason for her not to stop immediately in $x$ under the global time $\tau_{\hat S}$ as $\tau_{\hat S}\leq \tau_{S_n}$. Hence, the agent should accept $\tau_{\hat S}$ when $x\in \hat S$. \\
On the other hand, in the case $x\not\in \hat S$, there exists a stopping time $\tau\leq \tau_{\hat S}$ that gives strictly more expected reward than to stop immediately. In case the structure of the problem is such that  
\begin{align}
\mbox{ \eqref{eqdef2}  is satisfied with $\hat\tau=\tau_{\hat S}$ for all }x\in {\hat S\backslash\bigcup_{n\in\N}S_n}
,\tag{A3}\label{eq:A4}
\end{align}
and
\begin{align}
F(x,x)\leq \E_x(e^{-r\tau_{\hat S}}F(X_{\tau_{\hat S}},x))\mbox{ for all }x\not\in \hat S,\tag{A4}\label{eq:A3}
\end{align}
we see that it is also in this case optimal for the agent to accept $\tau_{\hat S}$. Indeed:

\begin{thm}\label{thm:forward}
Under the assumptions \eqref{eq:A1} -- \eqref{eq:A3}, the stopping time $\tau_{\hat S}$ defined above is an equilibrium stopping time.
\end{thm}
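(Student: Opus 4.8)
The plan is to verify directly that $\hat\tau=\tau_{\hat S}$ meets both requirements of Definition \ref{def:equ_stop_time}. Since $\hat S$ is, by construction, a closed and hence measurable subset of $\state$, the first entrance time $\tau_{\hat S}$ lies in $\mathcal{N}$, so only the two inequalities \eqref{eqdef1} and \eqref{eqdef2} need checking. The natural bookkeeping device is to split the state space into three regions,
\[
\state=\Big(\bigcup_{n\in\N}S_n\Big)\;\cup\;\Big(\hat S\setminus\bigcup_{n\in\N}S_n\Big)\;\cup\;\big(\state\setminus\hat S\big),
\]
and to treat each condition on each piece, the point being that on the first two regions (where $x\in\hat S$) closedness of $\hat S$ gives $\tau_{\hat S}=0$, whereas on the third $\tau_{\hat S}$ is genuinely positive.

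For \eqref{eqdef1} I would argue as follows. If $x\in\hat S$ then $\tau_{\hat S}=0$ and hence $J_{\tau_{\hat S}}(x)=F(x,x)$, so \eqref{eqdef1} holds with equality. If $x\notin\hat S$ then $J_{\tau_{\hat S}}(x)=\E_x(e^{-r\tau_{\hat S}}F(X_{\tau_{\hat S}},x))$, which is $\ge F(x,x)$ precisely by assumption \eqref{eq:A3}, i.e.\ the requirement that $F(x,x)\le\E_x(e^{-r\tau_{\hat S}}F(X_{\tau_{\hat S}},x))$ off $\hat S$. This settles \eqref{eqdef1} on all of $\state$.

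For \eqref{eqdef2} the three regions behave differently. On $\hat S\setminus\bigcup_n S_n$ the condition is assumed outright, namely assumption \eqref{eq:A4}, so nothing is to be done. For $x\notin\hat S$, closedness of $\hat S$ yields $h_0>0$ with $\{z:|z-x|\le h_0\}\cap\hat S=\emptyset$; for $h<h_0$ the process must leave the $h$-ball before reaching $\hat S$, so $\tau_{\hat S}\ge\tau_h$, and by the elementary additivity $\tau_{\hat S}=\tau_h+\tau_{\hat S}\circ\theta_{\tau_h}$ (valid on $\{\tau_{\hat S}\ge\tau_h\}$) the deviation time equals $\hat\tau\circ\theta_{\tau_h}+\tau_h=\tau_{\hat S}$. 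Hence the numerator in \eqref{eqdef2} vanishes identically for small $h$, and the $\liminf$ is trivially nonnegative. The remaining, substantive case is $x\in\bigcup_n S_n$: take the minimal $n\ge1$ with $x\in S_n$, so that $v_{n-1}(x,x)=F(x,x)$ (immediate stopping is optimal in the problem absorbed at $S_{n-1}$) while $x\notin S_{n-1}$. Closedness of $S_{n-1}$, assumption \eqref{eq:A1}, gives for small $h$ that $\{z:|z-x|\le h\}\cap S_{n-1}=\emptyset$, whence $\tau_{S_{n-1}}\ge\tau_h$ and $\tau_{S_{n-1}}=\tau_h+\tau_{S_{n-1}}\circ\theta_{\tau_h}$. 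Since $S_{n-1}\subseteq\hat S$ forces $\tau_{\hat S}\le\tau_{S_{n-1}}$, the deviation time $\sigma_h:=\tau_h+\tau_{\hat S}\circ\theta_{\tau_h}$ obeys $\sigma_h\le\tau_h+\tau_{S_{n-1}}\circ\theta_{\tau_h}=\tau_{S_{n-1}}$, so $\sigma_h$ is admissible in the optimal stopping problem defining $v_{n-1}(\cdot,x)$. Therefore
\[
J_{\hat\tau\circ\theta_{\tau_h}+\tau_h}(x)=\E_x\big(e^{-r\sigma_h}F(X_{\sigma_h},x)\big)\le v_{n-1}(x,x)=F(x,x)=J_{\tau_{\hat S}}(x),
\]
so the numerator in \eqref{eqdef2} is nonnegative for all small $h$, and \eqref{eqdef2} follows.

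The main obstacle is exactly this last region. Although the deviation $\sigma_h$ continues strictly past the immediate stopping point $\tau_{\hat S}=0$, one must still dominate it, as a stopping time, by $\tau_{S_{n-1}}$ so that the single-step optimality $v_{n-1}(x,x)=F(x,x)$ built into the construction can be invoked; this is where closedness \eqref{eq:A1} is essential, and it is also why one cannot argue via the limiting $\hat S$-problem directly (there $\sigma_h>\tau_{\hat S}$, so $\sigma_h$ is inadmissible). Assumption \eqref{eq:A2}, which identifies $v_\infty(x,x)$ with the value of the problem absorbed at $\hat S$, is what makes $\hat S$ the consistent limit object and underlies the heuristic leading to \eqref{eq:A3}; the formal verification itself rests on \eqref{eq:A1}, \eqref{eq:A4} and \eqref{eq:A3} together with the defining optimality of the sets $S_n$.
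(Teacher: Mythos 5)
Your proposal is correct and follows essentially the same route as the paper's own proof: the same three-region decomposition of $\state$, the same use of closedness of $\hat S$ (resp.\ $S_{n-1}$) to handle $x\notin\hat S$ (resp.\ $x\in S_n\setminus S_{n-1}$), assumptions \eqref{eq:A3} and \eqref{eq:A4} invoked at exactly the same points, and the key comparison $J_{\hat\tau\circ\theta_{\tau_h}+\tau_h}(x)\leq v_{n-1}(x,x)=F(x,x)$ via admissibility of the deviation time in the problem stopped at $\tau_{S_{n-1}}$. Your write-up is in fact slightly more explicit than the paper in justifying $\hat\tau\circ\theta_{\tau_h}+\tau_h\leq\tau_{S_{n-1}}$ through the inclusion $S_{n-1}\subseteq\hat S$ and the additivity of hitting times.
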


\begin{proof} Write $\hat\tau=\tau_{\hat S}$ for short. Let us first consider $x\not\in \hat S$. As $\hat S$ is closed, we find $h_0>0$ such that the open ball $B(x,h_0)$ around $x$ with radius $h_0$ is a subset of $\hat S^c$. Therefore, 
	\[J_{\hat\tau}(x)=J_{\hat\tau\circ \theta_{\tau_h}+\tau_h}(x)\]
	for all\ $h\leq h_0$, so that \eqref{eqdef2} is fulfilled automatically. Furthermore, \eqref{eq:A3} warrants \eqref{eqdef1}.
	
	For $x\in {\hat S\backslash\bigcup_{n\in\N}S_n}$, \eqref{eqdef1} holds trivially as $\hat\tau$ calls for immediate stopping, and \eqref{eq:A4}
yields \eqref{eqdef2}.
	
	It remains to check that the equilibrium conditions \eqref{eqdef1} and \eqref{eqdef2} are fulfilled for $x\in \bigcup_{n\in\N}S_n$.
		{	In this case \eqref{eqdef1} holds trivially.}
	For the second property, 	
	find $n\in\N$ such that $x\in S_n\setminus S_{n-1}$.
As $S_{n-1}$ is closed, there exists $\epsilon_0>0$ such that the ball $B(x,\epsilon_0)$ around $x$ with radius $\epsilon_0$ is a subset of $S_{n-1}^c$. Then, for each $h<\epsilon_0$ it holds that $\hat\tau\circ \theta_{\tau_h}+\tau_h\leq \tau_{S_{n-1}}$ and therefore
	\begin{align*}
	F(x,x)&=v_n(x,x)=\sup_{\tau\leq \tau_{S_{n-1}}}\E_x(e^{-r\tau}F(X_\tau,x))\\
	&\geq \E_x(e^{-r{\hat\tau\circ \theta_{\tau_h}+\tau_h}}F(X_{{\hat\tau\circ \theta_{\tau_h}+\tau_h}},x)) = J_{\hat\tau\circ \theta_{\tau_h}+\tau_h}(x),
	\end{align*}
where we used that $x\in S_n$ implies $v_n(x,x)=F(x,x)$ and  $v_{n-1}(x,x)=F(x,x)$. This yields 
		\[\liminf_{h\searrow 0}\frac{J_{\hat\tau}(x)-J_{\hat\tau\circ \theta_{\tau_h}+\tau_h}(x)}{\E_x(\tau_h)}=\liminf_{h\searrow 0}\frac{F(x,x)-J_{\hat\tau\circ \theta_{\tau_h}+\tau_h}(x)}{\E_x(\tau_h)}\geq 0.\]
\end{proof}

\begin{rem} \label{remark-on-assumptions} We now discuss the assumptions above. 
	\begin{itemize}
		\item On \eqref{eq:A1}: Optimal stopping sets are well-known to be closed under weak assumptions, see \cite{peskir2006optimal}, I.2.2. In particular, \eqref{eq:A1} is obviously fulfilled if 
		\begin{align*}
		x\mapsto F(x,x),\;x\mapsto v_n(x,x)\mbox{ are continuous.}
		\end{align*}
		\item On \eqref{eq:A2}: This assumption warrants that --- in the sense described above --- the optimal stopping sets of the problems related to\ $v_n$ converge to the optimal stopping set of the limiting problem. In particular, if the procedure terminates, i.e., there exists $n_0\in\N$ such that $S_{n_0}=S_{n_0+1}$, then assumption \eqref{eq:A2} is automatically fulfilled. 
		\item On \eqref{eq:A4}: This assumption is trivially fulfilled when the procedure terminates. In general, it can be understood as a version of a smooth fit property for the limiting problem. 
		\item On \eqref{eq:A3}: In contrast to the previous conditions, \eqref{eq:A3} is more than a technical regularity assumption. As mentioned above, it is by construction clear that for $x\not\in \hat S$, there exists a stopping time $\tau\leq \tau_{\hat S}$ with strictly larger expected reward than to stop immediately. But it is not clear in general that $\tau_{\hat S}$ also has this property. As discussed at the end of this section, Example \ref{ex:counter} is a counterexample.
	\end{itemize}
\end{rem}

\begin{rem}\label{rem:}
In some cases of interest, for example in the  one-dimensional case of Section \ref{sec:one-sided}, the procedure terminates already after one step, i.e.
\[\hat S=\{x\in\ E:F(x,x)=\sup_{\tau}\E_x(e^{-r\tau}F(X_\tau,x)) \}. \]
Under \eqref{eq:A1} it holds that $X_{\hat\tau}\in \hat S$, where we write $\hat\tau=\tau_{\hat S}$ as above. Hence, in the case of termination after one step, we obtain, using the strong Markov property, for all $x\in\ E$ and all stopping times $\sigma$,  
\[\mathbb{P}_x\left(\E_{X_{\hat\tau}}\left(e^{-r\sigma}F(X_\sigma,X_0) \right) >F(X_{\hat\tau},X_{\hat\tau})\right)=0.\]
This may be interpreted as an adaptation of the notion of dynamic optimality, see \cite{pedersen2016optimal} (and also Section \ref{sec:prev-lit}), to our setup. Hence, in this case the equilibrium (a local property) is in this sense also dynamically optimal (a global property). We remark that the equilibrium in Example \ref{optimistic-American} below is not dynamically optimal.
A (trivial) sufficient condition for $S_1=S_2$, i.e. for the procedure to terminate after one step, in the general case, is that: for all $y \notin S_1$ it holds that $S_1 \subseteq S_y$, where $S_{y}$ is defined as the stopping set for the standard stopping problem $v_0(x,y)=\sup_{\tau}\E_x(e^{-r\tau}F(X_\tau,y))$ where $y$ is fixed. To see this note that in this case, if $x \in S_2\backslash S_1$  then: (a) $v_1(x,x)=F(x,x)$ (by definition of $S_2$ and $x\in S_2$) and, (b) 
$v_0(x,x)= \sup_{\tau}\E_x(e^{-r\tau}F(X_\tau,x)) = \sup_{\tau\leq \tau_{S_1}}\E_x(e^{-r\tau}F(X_\tau,x)) = v_1(x,x)$ (to see this use that $x\notin S_1$ and the sufficient condition, which imply that the restriction $\tau\leq \tau_{S_1}$ is not relevant for optimality). From (a) and (b) follows that $v_0(x,x)=F(x,x)$ which implies that $x\in S_1$ and we have thus reached a contradiction. Hence, no $x$ satisfying  $x \in S_2\backslash S_1$ exists, i.e. $S_2\subseteq S_1$. Moreover, since $\{S_n\}$ is an increasing sequence it holds that $S_1\subseteq S_2$, and the claim follows. A more interesting sufficient condition for the one-dimensional case is provided in Theorem \ref{thm:one-sided-equ}, see also the proof.
\end{rem}

We close this section by discussing two examples. A general class of examples with a one-sided equilibrium stopping time found by this approach is discussed separately in Section \ref{sec:one-sided}.

\begin{ex} \label{optimistic-American}
	We now consider an underlying one-dimensional  Wiener process $X$ and fix a discount rate $r>0$. To illustrate the theory with an explicit example, we look at the reward function
	\[F(x,y):=\begin{cases}
	x^+&,\;y\geq 0,\\
	(-x)^+&,\;y<{0}.
	\end{cases}
	\]
	A (somewhat artificial) financial interpretation is that the holder of a perpetual American option with strike $0$ in a Bachelier market is uncertain whether she has bought a put or a call option. She is inherently optimistic and changes her belief depending on the state the process is in. If 
	 the current state is non-negative, i.e. $y\geq0$, she believes that the derivative is a call, and a put otherwise. 
	Using standard approaches to the solution of optimal stopping problems, such as a free boundary approach or the harmonic function technique of \cite{CI11}, it is straightforward to find that for fixed $y\geq0$
	\[v_0(x,y)=\sup_{\tau}\E_x(e^{-r\tau}F(X_\tau,y))=\begin{cases}
	x&,\;x\geq x_1,\\
	a_1e^{cx}&,\;x<x_1,
		\end{cases}\]
	where $c=\sqrt{2r}$, $x_1=1/c$ and $a_1=1/(ec)$. Due to symmetry, we have for $y<0$ that $v_0(x,y)=v_0(-x,-y).$ Therefore,
	\[S_1=(-\infty,-x_1]\cup[x_1,\infty).\]
	Now, we can go on iteratively to find $v_n$ and $S_n$ again using standard arguments for optimal stopping problems for diffusions. Writing 
	\[f_n(x)=a_ne^{-cx}+b_ne^{cx},\]
	we try to find $x_{n+1},\,a_n,\,b_n$ such that
	\begin{align*}
	f_n(-x_{n-1})&=0,\\
	f_n(x_{n})&=x_n,\\
	f_n'(x_{n})&=1.
	\end{align*}
	This system is indeed solvable and the solution is given by
	\begin{align*}
a_n&=\frac{1}{2}e^{cx_n}\left(x_n-\frac{1}{c}\right),\\
b_n&=\frac{1}{2}e^{-cx_n}\left(x_n+\frac{1}{c}\right)
\end{align*}	
and $x=x_n$ is the unique solution in $(0,x_{n-1})$ of
\[e^{2cx}=e^{-2cx_{n-1}}\frac{\frac{1}{c}+x}{\frac{1}{c}-x}.\]
Then, 
	\[S_n=(-\infty,-x_n]\cup[x_n,\infty)\]
and for $y\geq0$
\[v_n(x,y)=\begin{cases}
x&,\;x\geq x_{n+1},\\
f_n(x)&,\;-x_{n+1}\leq x<x_{n+1},\\
0&,\;x<-x_{n+1}
\end{cases}\]
and, as above, for $y<0$ it holds that $v_n(x,y)=v_n(-x,-y).$
It is easily seen that $x_n$ converges monotonically to the unique solution $x=x^*$ in $(0,1/c)$ of
\[e^{4cx}=\frac{\frac{1}{c}+x}{\frac{1}{c}-x},\]
so that
\[\hat S=(-\infty,-x^*]\cup[x^*,\infty)\]
and for $y\geq0$
\[v_\infty(x,y)=\begin{cases}
x&,\;x\geq x^*,\\
f_\infty(x)&,\;-x^*\leq x<x^*,\\
0&,\;x<-x^*
\end{cases}\]
with 	
	\[f_\infty(x)=\frac{1}{2}e^{cx^*}\left(x^*-\frac{1}{c}\right)e^{-cx}+\frac{1}{2}e^{-cx^*}\left(x^*+\frac{1}{c}\right)e^{cx}.\]
Again, $v_\infty(x,y)=v_\infty(-x,-y)$ for $y<0$.
 
			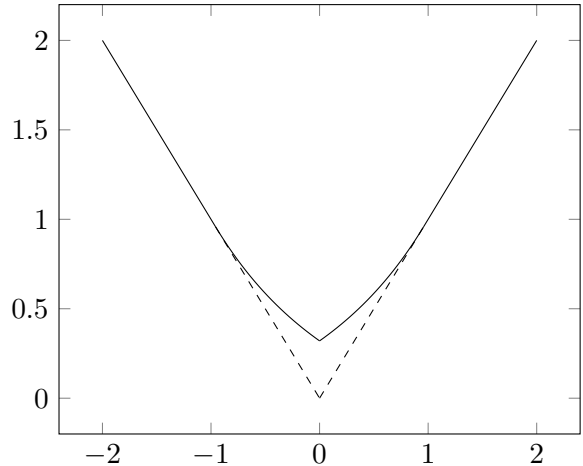
\begin{figure}[ht]
	\begin{center}
	\begin{tikzpicture} 
	\begin{axis} 
	\addplot[domain=0.9575:2] {x}; 
		\addplot[style=dashed,domain=0:0.9575] {x}; 
	\addplot[style=solid,domain=0:0.9575] {(1/2)*exp(1*0.9575)*(0.9575-1/1)*exp(-x)+(1/2)*exp(-1*0.9575)*(0.9575+1/1)*exp(x)}; 
	\addplot[style=solid,domain=-0.9575:0] {(1/2)*exp(1*0.9575)*(0.9575-1/1)*exp(x)+(1/2)*exp(-1*0.9575)*(0.9575+1/1)*exp(-x)}; 
			\addplot[style=dashed,domain=-0.9575:0] {-x}; 
		\addplot[domain=-2:-0.9575] {-x}; 
	\end{axis} 
	\end{tikzpicture}   
		\end{center}
			\caption{The functions $x\mapsto v_\infty(x,x)$ and $x\mapsto F(x,x)$ for $c=1$. Here, $x^*\approx 0.9575$.}
\end{figure}
It is straightforwardly verified that \eqref{eq:A1} and \eqref{eq:A2} are fulfilled. \eqref{eq:A4} holds by the smoothness of the function $v_\infty$ in\ $x^*$. \eqref{eq:A3} could be verified using the theory developed in the following section. Here, it is however immediately checked elementary due to the convexity of $f_\infty$ for $x\geq0$ and $f_\infty'(x^*)=1$. Hence, Theorem \ref{thm:forward} yields that 
\[\hat\tau=\inf\{t\geq 0: |X_t|\geq x^*\}\]
is an equilibrium stopping time. 

\end{ex}

\begin{ex}\label{ex:counter} We now come back to Example \ref{mixedSec}. We already know that there is no pure Markov strategy equilibrium stopping time, so that the approach described in this section cannot be successful. Indeed, $S_1=\{\partial_1,\partial_2\}$ and the procedure terminates after this step. As argued in Example \ref{mixedSec} \ref{item:ex_(iv)}, this is no equilibrium stopping time. More precisely, condition \eqref{eq:A3} fails to hold true.
\end{ex}

\section{A class of one-sided solvable problems with potential jumps}\label{sec:one-sided}
As a more advanced application of the method in the previous example, we consider a  Markov process on the real line. 
To construct an equilibrium stopping time for a wide class of examples, we consider the general setting of \cite{CST} for the auxiliary optimal stopping problems with value function
\begin{equation}\label{OSP_aux}
v_y(x)=\sup_{\tau}\E_x(e^{-r\tau}F(X_\tau,y)),\;y\in\R.
\end{equation}
That is, we assume that each function $F(\cdot,y)$ has a representation of the form
\begin{equation}\label{eq:M_T_repr}
F(x,y)=\E_x\big({Q_y}(M_T)\big),
\end{equation}
where $M_t:=\sup_{0\leq s\leq t}X_s,\;t\geq0,$ denotes the running maximum process of $X$ and $T$ is an exponentially with parameter $r$ distributed
random variable independent of $X$. In this section we assume that $r>0$. At first sight, it is not clear at all why such a representation should exist. However, as detailed in Section 2.2 of \cite{CST}, it always exists under suitable integrability and smoothness assumptions. More explicitly, it is given by
\begin{align}\label{conditional_dist}
&{Q_y}(z):=\frac{1}{r}\int_{-\infty}^{z}(r-A_X)F(u,y)\mathbb{P}_{x}(X_T\in du|M_T=z),\\
&\mathbb{P}_{x}(X_T\in du| M_T=z):=\mathbb{P}_{x}(X_T\in du\,,\, M_T\in
dz)/\mathbb{P}_{x}(M_T\in dz),\nonumber
\end{align}
where $A_X$ denotes the (extended) infinitesimal generator of $X$.
\begin{rem} If $A_X$ is applied to a function $E\times E\rightarrow \mathbb{R}$ then $A_X$ should, here and in the following, be understood to only act on the first variable.
\end{rem}

The conditional density used above can be found (semi-)explicitly for general L\'evy processes and diffusions, so that also $Q_y$ is given in analytical terms in these cases. The following result, which follows directly from Theorem 2.5 in \cite{CST}, then leads to the solution of the auxiliary optimal stopping problems in case they are of a one-sided form:

\begin{lem}
	\label{thm:one-sided}
		Assume that for each $y$ there exists a point $x^*_y$ such that
		\begin{enumerate}[label=(B\arabic*)]
			\item \label{item:B1} $Q_y(x)\leq 0$ for $x\leq x^*_y$,
			\item\label{item:B2} $Q_y(x)$ is positive and  non-decreasing for $x>x^*_y$.
		\end{enumerate}
	Then, the value function of the auxiliary optimal stopping problem \eqref{OSP_aux} is given by
	\begin{equation}
	\label{value_fct}
	v_y(x)=\E_x\left(Q_y(M_T)1_{\{M_T\geq x^*_y\}}\right)
	\end{equation}
	and
	\[\tau^*_y:=\inf\{t\geq 0: X_t\geq x^*_y\}\]
	is an optimal stopping time.
\end{lem}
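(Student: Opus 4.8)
The statement is, as the text notes, essentially a translation of Theorem 2.5 of \cite{CST} into the present notation, so the plan is to check that \ref{item:B1} and \ref{item:B2} supply exactly the sign-and-monotonicity structure that result requires, and then to record why the optimal value takes the form \eqref{value_fct}. The engine of the argument is the representation \eqref{eq:M_T_repr} combined with the independence of the $\mathrm{Exp}(r)$-time $T$ from $X$, which converts the discount factor into an event about $T$: for any stopping time $\tau$ and any $\mathcal F_\tau$-measurable $G$ one has $\E_x(e^{-r\tau}G)=\E_x(G\,1_{\{\tau\leq T\}})$, since conditionally on the path of $X$ one has $\mathbb P(T\geq\tau)=e^{-r\tau}$.

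First I would rewrite the objective through the running maximum. Fix $y$ and, on $\{\tau\leq T\}$, set $\hat M^T_\tau:=\sup_{\tau\leq s\leq T}X_s$, the maximum accumulated after $\tau$. By the strong Markov property at $\tau$ and the memorylessness of $T$, conditionally on $\mathcal F_\tau$ and $\{\tau\leq T\}$ the variable $\hat M^T_\tau$ has, started from $X_\tau$, the law of $M_T$; hence \eqref{eq:M_T_repr} gives $\E_x(Q_y(\hat M^T_\tau)\mid\mathcal F_\tau,\{\tau\leq T\})=F(X_\tau,y)$ and therefore
\begin{equation}
\E_x\big(e^{-r\tau}F(X_\tau,y)\big)=\E_x\big(Q_y(\hat M^T_\tau)\,1_{\{\tau\leq T\}}\big).
\end{equation}
This identity does the essential work, expressing the reward of an \emph{arbitrary} strategy through $Q_y$ evaluated at a maximum.

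Next I would obtain optimality by a pointwise domination under the integral. Using \ref{item:B1} to discard the range $(-\infty,x^*_y]$, where $Q_y\leq0$, and then \ref{item:B2}, where $Q_y$ is positive and non-decreasing, together with $\hat M^T_\tau\leq M_T$ on $\{\tau\leq T\}$, one gets
\begin{equation}
Q_y(\hat M^T_\tau)\,1_{\{\tau\leq T\}}\leq Q_y(\hat M^T_\tau)\,1_{\{\hat M^T_\tau> x^*_y,\,\tau\leq T\}}\leq Q_y(M_T)\,1_{\{M_T\geq x^*_y\}}.
\end{equation}
Taking $\E_x$ yields $\E_x(e^{-r\tau}F(X_\tau,y))\leq\E_x(Q_y(M_T)1_{\{M_T\geq x^*_y\}})$ for every $\tau$, which is the right-hand side of \eqref{value_fct}. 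For $\tau=\tau^*_y$ these inequalities become equalities: since $X$ stays below $x^*_y$ before $\tau^*_y$, the global maximum $M_T$ coincides with $\hat M^{T}_{\tau^*_y}$ on $\{\tau^*_y\leq T\}=\{M_T\geq x^*_y\}$, so the chain collapses and $\tau^*_y$ attains the bound.

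I expect the only genuinely delicate point to be the threshold when $X$ has jumps: $X_{\tau^*_y}$ may overshoot $x^*_y$, so one cannot reduce to the value at a boundary point but must carry the maximum process through the strong Markov step, and one must also verify that the boundary event $\{M_T=x^*_y\}$ contributes nothing (which holds once $Q_y(x^*_y)=0$, e.g. by continuity of $Q_y$ at its zero-crossing). All of this is precisely what Theorem 2.5 of \cite{CST} packages, so in the final write-up it suffices to match \ref{item:B1}--\ref{item:B2} to its hypotheses and invoke it.
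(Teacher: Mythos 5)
Your proposal is correct and amounts to the paper's own approach: the paper proves this lemma simply by invoking Theorem 2.5 of \cite{CST}, and the running-maximum/domination argument you reconstruct is precisely the mechanism behind that cited theorem --- indeed your attainment identity for $\tau^*_y$ is the same one the paper later imports as Lemma 2 of \cite{christensen2015impulse} in the proof of Theorem \ref{thm:one-sided-equ}. The one delicate point, the event $\{M_T = x^*_y\}$ on which \ref{item:B1} still allows $Q_y(x^*_y)<0$ so that your pointwise domination could fail when the law of $M_T$ has an atom there (in which case $\tau^*_y$ itself need not be optimal), is one you flag yourself, and it is excluded under the standing assumptions of \cite{CST} (continuity of $Q_y$, hence $Q_y(x^*_y)=0$, or an atomless running maximum); it is thus a caveat about the lemma's hypotheses rather than a gap in your argument.
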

 
Now, using the approach described in Section \ref{sec:forward_iteration}, we obtain the following verification theorem for problems where the underlying auxiliary optimal stopping problems are one-sided.

\begin{thm}\label{thm:one-sided-equ}
In the one-dimensional setting of this section assume that for each $y$ there exists a point $x^*_y$ such that \ref{item:B1} and \ref{item:B2} hold true. Furthermore, assume that there exists a point $x^*$ such that $x^*_{y}\leq y$ for $y\geq x^*$ and $x^*_{y}\geq x^*$ for $y\leq x^*$. Then
\[\hat \tau:=\inf\{t\geq 0:X_t\geq x^*\}\]
	is an equilibrium stopping time. 
\end{thm}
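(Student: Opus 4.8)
The plan is to produce $\hat\tau$ from the forward iteration of Section \ref{sec:forward_iteration} and then to apply Theorem \ref{thm:forward}, with the threshold representation of \cite{CST} doing the analytic work.

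\emph{Identifying $S_1$.} By Lemma \ref{thm:one-sided} the auxiliary problem $v_0(\cdot,y)=v_y(\cdot)$ has stopping region $[x^*_y,\infty)$, so that $v_0(x,x)=F(x,x)$ precisely when $x\ge x^*_x$; that is, $S_1=\{x:\ x\ge x^*_x\}$. The ordering hypothesis pins this set down: for $x\ge x^*$ it gives $x^*_x\le x$, so $x\in S_1$, whereas for $x<x^*$ it gives $x^*_x\ge x^*>x$, so $x\notin S_1$ (at $x=x^*$ the two bounds force $x^*_{x^*}=x^*$). Hence $S_1=[x^*,\infty)$, which is closed, so \eqref{eq:A1} holds. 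I will aim to show the scheme terminates at this step, so that $\hat S=\bigcup_nS_n=[x^*,\infty)$ and $\hat\tau=\tau_{\hat S}=\inf\{t:X_t\ge x^*\}$; then \eqref{eq:A2} is automatic and, as $\hat S=\bigcup_nS_n$, the set $\hat S\setminus\bigcup_nS_n$ is empty and \eqref{eq:A4} is vacuous.

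\emph{The key inequality.} The substantive assumption left is \eqref{eq:A3}, namely $F(x,x)\le\E_x(e^{-r\hat\tau}F(X_{\hat\tau},x))$ for $x\notin\hat S$, i.e.\ for $x<x^*$. I would derive it from the threshold-value identity behind \cite{CST}: with $\tau_a:=\inf\{t:X_t\ge a\}$, the representation \eqref{eq:M_T_repr} combined with the strong Markov property at $\tau_a$ and the memorylessness of $T$ (so that $\{M_T\ge a\}=\{\tau_a\le T\}$ and conditioning on $\mathcal F_{\tau_a}$ produces the discount factor $e^{-r\tau_a}$) gives, for every $x\le a$,
\[\E_x\big(e^{-r\tau_a}F(X_{\tau_a},y)\big)=\E_x\big(Q_y(M_T)\mathbf{1}_{\{M_T\ge a\}}\big).\]
Specializing to $a=x^*$ and $y=x$ with $x<x^*$ yields
\[\E_x\big(e^{-r\hat\tau}F(X_{\hat\tau},x)\big)-F(x,x)=-\,\E_x\big(Q_x(M_T)\mathbf{1}_{\{M_T<x^*\}}\big)\ge0,\]
since on $\{M_T<x^*\}$ one has $M_T<x^*\le x^*_x$ (the ordering hypothesis, as $x<x^*$) and hence $Q_x(M_T)\le0$ by \ref{item:B1}. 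This is exactly \eqref{eq:A3}; moreover, as $\hat\tau=\tau_{S_1}$ is admissible in the problem defining $v_1$, it also gives $v_1(x,x)\ge\E_x(e^{-r\hat\tau}F(X_{\hat\tau},x))\ge F(x,x)$, which is the mechanism driving termination after one step.

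\emph{Finishing, and the main obstacle.} With \eqref{eq:A1}--\eqref{eq:A3} in hand, Theorem \ref{thm:forward} yields the claim. I expect the real obstacle to be precisely this termination/\eqref{eq:A3} step: one must show that the \emph{particular} rule $\tau_{\hat S}$, which forces stopping already at $x^*$ rather than at the larger optimal level $x^*_x$, still dominates immediate stopping, and the strict gap needed to conclude $x\notin S_2$ is not automatic from $x\notin S_1$ (it can degenerate when $Q_x$ vanishes throughout $(-\infty,x^*)$). To sidestep any reliance on such strictness I would, if needed, verify Definition \ref{def:equ_stop_time} directly for $\hat S=[x^*,\infty)$: condition \eqref{eqdef1} is the threshold computation above (and an equality for $x\ge x^*$); and \eqref{eqdef2} holds for $x<x^*$ because a small ball around $x$ avoids the closed set $\hat S$, so the numerator vanishes for all small $h$, and for $x\ge x^*$ because the value $J_{\hat\tau\circ\theta_{\tau_h}+\tau_h}(x)$ of every comparison strategy is bounded above by $v_0(x,x)=F(x,x)$ (using $x\ge x^*_x$), while $J_{\hat\tau}(x)=F(x,x)$, so the numerator stays nonnegative for every $h$.
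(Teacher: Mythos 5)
Your proposal is correct and follows essentially the same route as the paper: identify $S_1=[x^*,\infty)$ via Lemma \ref{thm:one-sided}, observe that the iteration terminates after one step, verify \eqref{eq:A3} through the inequality $F(x,x)=\E_x\big(Q_x(M_T)\big)\leq \E_x\big(Q_x(M_T)\mathbf{1}_{\{M_T\geq x^*\}}\big)=\E_x\big(e^{-r\hat\tau}F(X_{\hat\tau},x)\big)$ using \ref{item:B1} on $\{M_T<x^*\}\subseteq\{M_T<x^*_x\}$, and then invoke Theorem \ref{thm:forward}. The only departures are that the paper cites Lemma 2 of \cite{christensen2015impulse} for the threshold identity you re-derive via the strong Markov property and memorylessness of $T$, and that your fallback direct check of Definition \ref{def:equ_stop_time} handles the possible degeneracy in the termination step which the paper dismisses as easily seen.
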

\begin{rem} 
The conditions of Theorem \ref{thm:one-sided-equ} imply that if the function $y \mapsto x^*_y$ is continuous then $x^*$ is the unique fixed point of that function.
\end{rem}
\begin{proof}  Note that Lemma \ref{thm:one-sided} yields that the forward iteration sequence of Section \ref{sec:forward_iteration} is given by
\[S_1=[x^*,\infty)\]
and the procedure then terminates i.e. $S_1=S_2=S_3=...=\hat S$ and $S_1$ is closed (this can easily be seen directly and it also follows from the following argument). To apply Theorem \ref{thm:forward}, it remains to check \eqref{eq:A3}, i.e.
\begin{align*}
F(x,x)\leq \E_x(e^{-r\hat\tau}F(X_{\hat \tau},x))\mbox{ for all }x<x^*.
\end{align*}
This, however, holds as
\begin{align*}
F(x,x)&=\E_x\big({Q_x}(M_T)\big)\leq \E_x\big({Q_x}(M_T)1_{\{M_T\geq x^*\}}\big),
\end{align*}
where we used that ${Q_x}(M_T)$ is non-positive on $\{M_T< x^*\} \subseteq \{M_T< x^*_x\}$ by \ref{item:B1}. We conclude by noting that Lemma 2 in \cite{christensen2015impulse} yields
\[\E_x\big({Q_x}(M_T)1_{\{M_T\geq x^*\}}\big)=\E_x(e^{-r\hat\tau}F(X_{\hat \tau},x)).\]
\end{proof}

\begin{rem} By applying the previous results to $-X$, we immediately obtain an analogous result for the case that the auxiliary optimal stopping sets are of left-sided type $(-\infty,x^*_y]$.
\end{rem}

\begin{ex} \label{state-dependent-American}To illustrate the general approach above, we consider a perpetual American call problem with state-dependent strike $K(y)$ in a general L\'evy market. One interpretation is an investor who has forgotten the concerted strike of the option. Depending on the state of the price process, she changes her opinion on the concerted strike   
--- this situation is of course typically not realistic and the example is included only in order to illustrate the theory, however, see Remark \ref{put-ver}.
 More concretely, her reward function for the log-price process $X$ has the structure
\[F(x,y)=(e^x-K(y))^+,\]
where we assume that the function $K:\R\rightarrow (0,\infty)$ is 
continuous and non-increasing; the interpretation of this is that the investor believes the strike to be lower when the asset price is higher. Let $X$ be a general L\'evy process. To avoid trivial cases, we assume $X$ not to be a subordinator and to fulfill $E_0(e^{X_1})<{e}^r$. For technical reasons, we first ignore the $(\cdot)^+$, i.e. we change the reward function to 
 \[\tilde F(x,y)=e^x-K(y),\]
 which makes some arguments and notations in the following shorter. 
Using the approach from \cite{CST}, or just by guessing, we see that the function $Q_y$ is given by
\[Q_y(x)=ae^{x}-K(y),\]
where $a={1}/{\E_0e^{M_T}}<1$, see also \cite{M}. The value of $a$ can be found more explicitly for many classes of processes. For example, for L\'evy processes without positive jumps, $M_T$ is exponentially distributed. In the case of a Wiener process $X$, we obtain $a=\frac{\sqrt{2r}-1}{\sqrt{2r}}$. The optimal stopping boundary for the auxiliary problem is therefore, by Lemma \ref{thm:one-sided}, given by $x_y^*={\log(K(y)/a)}$. Now use the properties of $K(\cdot)$ to verify that the conditions of Theorem \ref{thm:one-sided-equ} are satisfied and that there exits a (unique) fixed point
\[x^*={\log(K(x^*)/a)}.\]

It follows from Theorem \ref{thm:one-sided-equ} that the equilibrium stopping time for the reward function $\tilde F(x,y)$ is given by 
\begin{align} \label{state-dependent-American-ST}
\hat \tau=\inf\{t\geq 0:X_t\geq x^*\}.
\end{align}
We may therefore conclude that the corresponding equilibrium value function $\tilde{J}_{\hat\tau}(x):= \E_x\left(e^{-r{\hat \tau }}\tilde{F}(X_{\hat \tau },x)\right)$ and the reward function $\tilde F(x,y)$ satisfy the equilibrium properties \eqref{eqdef1} and \eqref{eqdef2}.

In order to show that \eqref{state-dependent-American-ST} is an equilibrium stopping time also for the original reward function $F(x,y)=(e^x-K(y))^+$, let us verify that also $J_{\hat\tau}(x):= \E_x\left(e^{-r{\hat \tau }}F(X_{\hat \tau },x)\right)$ and $F(x,y)$ satisfy \eqref{eqdef1} and \eqref{eqdef2}: First, note that if $x$ is such that $e^{x^*}-K(x)\geq0$, then 
$\tilde{J}_{\hat\tau}(x)= \E_x\left(e^{-r{\hat \tau }}(e^{X_{\hat \tau }}-K(x))\right) = \E_x\left(e^{-r{\hat \tau }}(e^{X_{\hat \tau }}-K(x))^+\right)=J_{\hat\tau}(x)$, and similarly $\tilde J_{\hat\tau\circ \theta_{\tau_h}+\tau_h}(x)=J_{\hat\tau\circ \theta_{\tau_h}+\tau_h}(x)$. Moreover, if $x$ is such that $e^{x^*}-K(x)<0$, then we are in the continuation region 
and hence ${\hat\tau} ={\hat\tau\circ \theta_{\tau_h}+\tau_h}$ for sufficiently small $h$. It follows that $J_{\hat\tau}(x)=J_{\hat\tau\circ \theta_{\tau_h}+\tau_h}(x)$ for sufficiently small $h$. We conclude that $J_{\hat\tau}(x)$ satisfies \eqref{eqdef2}. 
Second, note that if $x\geq x^*$ then $J_{\hat\tau}(x)=F(x,x)$ and \eqref{eqdef1} follows trivially. Let us deal with the case $x< x^*$. If $x$ is such that $e^x-K(x)<0$ then $F(x,x)=0$, and since $J_{\hat\tau}(x)\geq 0$, it follows that \eqref{eqdef1} satisfied. If $x$ is such that $e^x-K(x)\geq0$ then $e^{x^*}-K(x)>0$ which implies that $J_{\hat\tau}(x)=\tilde J_{\hat\tau}(x)$ and $\tilde F(x,x) = F(x,x)$. We conclude that $J_{\hat\tau}(x)$ and $F(x,x)$ satisfy \eqref{eqdef1}.

We have thus shown that the equilibrium stopping time for the original reward $F(x,y)=(e^x-K(y))^+$ is also given by \eqref{state-dependent-American-ST} and it follows that the corresponding equilibrium value function can be written as
\begin{equation}
	J_{\hat\tau}(x)=\begin{cases}
	e^x-K(x),&x\geq x^*,\\
	\E_x\left(e^{-r\hat\tau}\left(e^{X_{\hat\tau}}-K(x)\right)^+\right),&x<x^*,
	\end{cases}
	\end{equation}
	where more explicitly, for $x$ with $\log K(x)<x^*$,
	\begin{align*}
	\E_x\left(e^{-r\hat\tau}\left(e^{X_{\hat\tau}}-K(x)\right)^+\right)
	&=\E_x\left(e^{-r\hat\tau}\left(e^{X_{\hat\tau}}-K(x)\right)\right)\\
	&=a\E_x\left(e^{M_T}1_{\{M_T\geq x^*\}}\right)-K(x)\mathbb P_x(M_T\geq x^*),
	\end{align*}
\end{ex}

\begin{rem} \label{put-ver} {A put-version of Example \ref{state-dependent-American} can be interpreted --- economically more meaningful --- as an equilibrium selling problem under endogenous habit formation and exponential utility in a Bachelier market. That problem is, however, analyzed  and discussed in the more realistic Black-Scholes market in Example \ref{eq-selling-strat}.}
\end{rem}

\section{The time-inconsistent variational inequalities}\label{sec:time-incon-var-eq} 
In the rest of the paper we assume that the state process $X$ is the strong solution to the $d$-dimensional SDE
\begin{align} \label{SDE}
dX_t = \mu(X_t)dt + \sigma(X_t)dW_t, \enskip X_0=x\in E,
\end{align} 
{where $W$ is an $r$-dimensional Wiener process, the state space $E\subseteq\mathbb{R}^d$ is an open set, and the deterministic functions $\mu$ and $\sigma$ are continuous. Standard conditions for the existence of a strong solution to \eqref{SDE} can be found in e.g. \cite{karatzas2012brownian}. Note that we do not exclude the possibility that $E = \mathbb{R}^d$. The generator $A_X$ is now given by the differential operator}
\[A_X = \sum_i^d\mu_i(x) \frac{ \partial}{\partial x_i} + \frac{1}{2}\sum_{i,j}^da_{i,j}(x)\frac{ \partial^2}{\partial x_ix_j}, \enskip a(x):=\sigma(x)\sigma^T(x).\]

\subsection{A heuristic derivation of the time-inconsistent variational inequalities}
In this subsection we heuristically derive the time-inconsistent variational inequalities. We remark that this section is only of motivational value and that there are no claims of rigor in the derivation. In this {sub}section we consider $r=0$ for the ease of exposition. Suppose an equilibrium stopping time $\hat\tau$ exists, see Definition \ref{def:equ_stop_time}. Recall that $\tau_h=\inf\{t\geq 0:|X_t-X_0|\geq h\}$ and let $f_{\hat\tau}(X_{\tau_h},x)$ denote the auxiliary function that uses the equilibrium stopping time given the starting value $X_{\tau_h}$. Given sufficient regularity, we use the strong Markov property to see that
\begin{align}
J_{\hat\tau\circ \theta_{\tau_h}+\tau_h}(x) 
&:= \E_x(F(X_{\hat\tau\circ \theta_{\tau_h}+\tau_h},x))\\
&=\E_x(\E_{X_{\tau_h}}(F(X_{\hat\tau},x)))\\
& = \E_x(f_{\hat\tau}(X_{\tau_h},x)) \label{heur0}
\end{align}
and It\^o's formula to obtain
\begin{align}
\E_x(f_{\hat\tau}(X_{\tau_h},x))
= f_{\hat\tau}(x,x) +\E_x\left(\int_0^{\tau_h}A_Xf_{\hat\tau}(X_t,x)dt\right), \label{heur0.5}
\end{align}
where we recall that the differential operator $A_X$ operates only on the first variable. 
We now use the dominated convergence theorem, Lebesgue's differentiation theorem, \eqref{heur0}, \eqref{heur0.5} and $J_{\hat\tau}(x)=f_{\hat\tau}(x,x)$ (cf. Definition \ref{def:equ_func}) to obtain, 
under sufficient regularity, 
\begin{align}
\liminf_{h\searrow 0}\frac{J_{\hat\tau}(x)-J_{\hat\tau\circ \theta_{\tau_h}+\tau_h}(x)}{\E_x(\tau_h)} 
& = \liminf_{h\searrow 0}\frac{f_{\hat\tau}(x,x)-\E_x(f_{\hat\tau}(X_{\tau_h},x))}{\E_x(\tau_h)}\\
& = \liminf_{h\searrow 0}\frac{\E_x\left(\int_0^{\tau_h}-A_Xf_{\hat\tau}(X_t,x)dt\right)}{\E_x(\tau_h)}\\
& =		- A_Xf_{\hat\tau}(x,x).
\end{align}
This, of course, reflects the well-known characterization of the infinitesimal generator due to Dynkin. The definition of an equilibrium stopping time in Definition \ref{def:equ_stop_time} therefore translates to $A_Xf_{\hat\tau}(x,x)\leq 0$ and $J_{\hat\tau}(x)-F(x,x)\geq 0$. Now note that $J_{\hat\tau}(x) = f_{\hat\tau}(x,x)$ implies that the equilibrium value function $J_{\hat\tau}(x)$ is completely determined by the auxiliary function $f_{\hat\tau}(x,y)$. We therefore summarize the above in terms of the auxiliary function:
\begin{align} 
f_{\hat\tau}(x,x)\geq F(x,x),\enskip x \in E\label{heur1}\\
A_Xf_{\hat\tau}(x,x)\leq 0, \enskip x \in E.\label{heur2}
\end{align}
For any $x$, stopping yields the value $F(x,x)$. Using \eqref{heur1} we see that it is therefore optimal, for the $x$-agent, to stop if and only if $f_{\hat\tau}(x,x)=F(x,x)$. Suppose that the set 
\[C = \left\{x \in E: f_{\hat\tau}(x,x) > F(x,x) \right\}\]
is open, where $C$ is said to be the continuation region. It follows that the corresponding equilibrium stopping time is the first exit time from $C$, or analogously the first entrance time into the stopping region $E \backslash C$, i.e
\begin{align} \label{EQstoppingtime}
\tau_{E \backslash C} = \inf\{t \geq 0: X_t \in E \backslash C\} = \inf\{t \geq 0: X_t \notin C\},
\end{align}
which implies that $f_{\hat\tau}(x,y)=\E_x(F(X_{\tau_{E \backslash C}},y))$ for all $x$ and $y$. By \eqref{EQstoppingtime} it is also clear that if $x\in E \backslash C$, then $\tau_{E \backslash C}=0$ which implies that $f_{\hat\tau}(x,y)=\E_x(F(X_{\tau_{E \backslash C}},y))=F(x,y)$. It therefore holds, for any $y$, that 
\begin{align*}
f_{\hat\tau}(x,y) = F(x,y), \enskip x \in E \backslash C.
\end{align*}

Moreover, since $f_{\hat\tau}(x,y)=\E_x(F(X_{\tau_{E \backslash C}},y))$ it follows that $f_{\hat\tau}(X_t,y)$ is a martingale on $C$,  for any fixed $y$, given sufficient regularity. Hence, for any fixed $y$, 
\begin{align*}
A_Xf_{\hat\tau}(x,y)=0, \enskip x\in C.
\end{align*}
Let us summarize our findings. If an equilibrium stopping time exists then, under the assumption of sufficient regularity, it is given by $\tau_{E \backslash C}$ defined in \eqref{EQstoppingtime} and the auxiliary function 
$f_{\hat\tau}(x,y)=\E_x(F(X_{\tau_{E \backslash C}},y))$ satisfies
\begin{align}
A_Xf_{\hat\tau}(x,x)\leq 0, \enskip x \in E,
\end{align}
and for any fixed $y\in E$
\begin{align}
A_Xf_{\hat\tau}(x,y) &=0, \enskip x \in C,\\
f_{\hat\tau}(x,y) -F(x,y) &= 0, \enskip x \in E \backslash C,
\end{align}
where
\begin{align}
C = \left\{x\in E: f_{\hat\tau}(x,x) > F(x,x) \right\}.
\end{align}
We call the expressions above the \emph{time-inconsistent variational inequalities}.

\subsection{A verification theorem}
Let us define the time-inconsistent variational inequalities in more detail.
\begin{defi} \label{time-incon-free-B} A function 
	$f:E\times E\rightarrow\mathbb{R}$ is said to satisfy the \emph{time-inconsistent variational inequalities} if\footnote{Recall that the differential operator $A_X$ operates only on the first variable, in e.g. $f(x,x)$.}
\begin{align}
A_Xf(x,x)-rf(x,x)&\leq 0, \enskip x\in E\backslash \partial C,\label{freeB1}
\end{align}
and, for each fixed $y\in E$,
\begin{align}
A_Xf(x,y) - rf(x,y)&=0, \enskip x\in C,\label{freeB3}\\
f(x,y)-F(x,y) &= 0, \enskip x\in  E \backslash C,\label{freeB3-2}
\end{align}
where
\begin{align}
C:= \left\{x \in E: f(x,x) > F(x,x) \right\}.
\end{align}
Moreover, the function $f(\cdot,y):E \rightarrow \mathbb{R}$ must, for each fixed $y\in E$, satisfy:
\begin{enumerate}[label=(\roman*)]

\item \label{regularity-verTHM-onC} $f(\cdot,y) \in \mathcal{C}(\overline{C})\cap \mathcal{C}^2(C)$, where $\overline{C}$ denotes the closure of $C$ in $E$,

\item \label{regularity-verTHM2} $f(\cdot,y) \in \mathcal{C}^1(B(y,\epsilon)) \cap \mathcal{C}^2(B(y,\epsilon) \backslash \partial C)$ 
for some $\epsilon>0$, where the second order derivative is locally bounded (near $\partial C$),

\item \label{regularity-verTHMbounded} $f(\cdot,y)$ is bounded on $\overline{C}$.
\end{enumerate}
Lastly, we also demand that:
\begin{enumerate}[resume,label=(\roman*)]
\item \label{regularity-verTHMLipschitz} $C$ is   
open  and $\partial C \neq \emptyset $ is a Lipschitz surface\footnote{For a definition see \cite[~ch. 10]{oksendal2013stochastic}.}.
\item \label{cond5} 
$\limsup_{z\notin \partial C \rightarrow y}(A_Xf(z,y)-rf(z,y))\leq 0$, for $y\in \partial C$.
\end{enumerate}
\end{defi}

\begin{thm} \label{verificTHM} Suppose that a function $f:E\times E\rightarrow\mathbb{R}$ solves the time-inconsistent variational inequalities. Suppose that the state process $X$ that solves the SDE \eqref{SDE} spends almost no time on the boundary $\partial C$, i.e.
\begin{equation} \label{regularity-verTHM}
\int_0^{\infty} I_{\partial C}(X_t)dt = 0 \enskip \textrm{a.s.,}
\end{equation}
and that 
\begin{align} \label{VerTHM1}
\hat\tau:= \inf\{t\geq 0: X_t\notin C\}<\infty \enskip \textrm{a.s.},
\end{align}
for each starting value $X_0=x\in E$. 
Then, 
\begin{itemize}
\item {$J:E\rightarrow\mathbb{R}$, with $J(x):=f(x,x)$}, is an equilibrium value function,
\item ${f}:E\times E \rightarrow\mathbb{R}$ is the corresponding auxiliary function,  and 
\item the stopping time $\hat\tau$ in \eqref{VerTHM1} is the corresponding equilibrium stopping time.
\end{itemize}
\end{thm}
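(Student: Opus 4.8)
The plan is to verify directly that $\hat\tau$ satisfies Definition \ref{def:equ_stop_time} and that $f$ and $J(x)=f(x,x)$ are the associated auxiliary and equilibrium value functions. First note that since $C$ is open (condition \ref{regularity-verTHMLipschitz}), the set $E\setminus C$ is closed in $E$ and $\hat\tau=\inf\{t\geq0:X_t\notin C\}$ is precisely the entrance time into $E\setminus C$; hence $\hat\tau\in\mathcal{N}$ in the sense of Definition \ref{def:pure_strat}.

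\emph{Step 1 (identify the auxiliary function).} I would show $f(x,y)=\E_x(e^{-r\hat\tau}F(X_{\hat\tau},y))=f_{\hat\tau}(x,y)$ for all $x,y$. For $x\in E\setminus C$ this is immediate from \eqref{freeB3-2} since then $\hat\tau=0$. For $x\in C$, fix $y$ and exhaust $C$ by open sets $C_n$ with $\overline{C_n}\subseteq C$, setting $\hat\tau_n=\inf\{t:X_t\notin C_n\}$. On a neighborhood of $\overline{C_n}$ the function $f(\cdot,y)$ is $\mathcal{C}^2$ by \ref{regularity-verTHM-onC}, so It\^o's formula applies and, because $(A_X-r)f(\cdot,y)=0$ on $C$ by \eqref{freeB3}, the process $e^{-r(t\wedge\hat\tau_n)}f(X_{t\wedge\hat\tau_n},y)$ is a martingale. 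Boundedness of $f(\cdot,y)$ on $\overline C$ (condition \ref{regularity-verTHMbounded}) together with $r\geq0$ lets me pass to the limit $n\to\infty$, $t\to\infty$ by dominated convergence, using $\hat\tau<\infty$ a.s.\ (condition \eqref{VerTHM1}) and the continuity $f(\cdot,y)\in\mathcal{C}(\overline C)$ to conclude $f(X_{\hat\tau_n},y)\to f(X_{\hat\tau},y)=F(X_{\hat\tau},y)$, the last equality holding because $X_{\hat\tau}\in\partial C\subseteq E\setminus C$ by path-continuity. This yields $f=f_{\hat\tau}$, hence also $J(x)=f(x,x)=\E_x(e^{-r\hat\tau}F(X_{\hat\tau},x))=J_{\hat\tau}(x)$.

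\emph{Step 2 (first equilibrium condition).} Condition \eqref{eqdef1} is read off from the definition of $C$: for $x\in C$ we have $f(x,x)>F(x,x)$, and for $x\in E\setminus C$ we have $f(x,x)=F(x,x)$ by \eqref{freeB3-2}; in either case $J_{\hat\tau}(x)=f(x,x)\geq F(x,x)$.

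\emph{Step 3 (second equilibrium condition).} Using the strong Markov property at $\tau_h$ together with Step 1, I would write $J_{\hat\tau\circ\theta_{\tau_h}+\tau_h}(x)=\E_x(e^{-r\tau_h}f(X_{\tau_h},x))$, so that Dynkin's formula applied to $f(\cdot,x)$ on $[0,\tau_h]$ gives
\[J_{\hat\tau}(x)-J_{\hat\tau\circ\theta_{\tau_h}+\tau_h}(x)=-\E_x\Big(\int_0^{\tau_h}e^{-rs}(A_X-r)f(X_s,x)\,ds\Big).\]
I then split into three cases. If $x\in C$, then $B(x,h)\subseteq C$ for small $h$, the integrand vanishes by \eqref{freeB3}, and the numerator is identically zero. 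If $x$ is interior to $E\setminus C$, then $(A_X-r)f(\cdot,x)$ is continuous near $x$ and the Dynkin-type limit equals $-(A_X-r)f(x,x)\geq0$ by \eqref{freeB1}. If $x\in\partial C$, I would fix $\delta>0$, use condition \ref{cond5} to find $\rho>0$ with $(A_X-r)f(z,x)\leq\delta$ for $z\in B(x,\rho)\setminus\partial C$, and bound the integral (using $e^{-rs}\leq1$ and that $X$ spends a.s.\ no time on $\partial C$ by \eqref{regularity-verTHM}) by $\delta\,\E_x(\tau_h)$, giving $\liminf_{h\searrow0}\geq-\delta$; letting $\delta\searrow0$ yields \eqref{eqdef2}.

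The main obstacle is the rigorous use of It\^o/Dynkin in Step 3 (and the passage through $\partial C$), since $f(\cdot,x)$ is only $\mathcal{C}^1$ across the free boundary and merely $\mathcal{C}^2$ on each side with locally bounded second derivatives (condition \ref{regularity-verTHM2}). This calls for an It\^o--Tanaka/Peskir change-of-variable argument: because the first-order derivatives match across the Lipschitz surface $\partial C$ (condition \ref{regularity-verTHMLipschitz}), no local-time term on $\partial C$ appears, and because $X$ spends a.s.\ no time on $\partial C$, the second-order integrand is well-defined off a null set of times. Verifying that these hypotheses legitimately remove the boundary contribution is the delicate point; everything else reduces to the martingale computation of Step 1, the definition of $C$, and the Dynkin characterization of the generator.
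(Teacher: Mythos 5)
Your proposal is correct and its architecture coincides with the paper's: Step 1 (exhaust $C$ by compactly contained open sets, apply It\^o on each, kill the drift by \eqref{freeB3}, pass to the limit with \ref{regularity-verTHMbounded}, \eqref{VerTHM1} and bounded convergence to identify $f$ with the auxiliary function) and Step 2 are essentially verbatim what the paper does, and your reduction of \eqref{eqdef2} to the sign of a Dynkin-type limit of $\E_x\bigl(\int_0^{\tau_h}e^{-rs}(A_X-r)f(X_s,x)\,ds\bigr)/\E_x(\tau_h)$, split into the cases $x\in C$, $x$ interior to $E\setminus C$, and $x\in\partial C$ (with the $\delta$--$\rho$ use of \ref{cond5} together with \eqref{regularity-verTHM} at the boundary), is a clean rendering of the paper's two-case argument. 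The one genuine divergence is how you justify Dynkin's formula for $f(\cdot,x)$, which is only $\mathcal{C}^1$ across $\partial C$: you invoke an It\^o--Tanaka/Peskir change-of-variable formula with local time on the surface $\partial C$, arguing that the matching first derivatives kill the local-time term, whereas the paper instead approximates $f(\cdot,y)$ by genuinely $\mathcal{C}^2$ functions $f_i(\cdot,y)$ via Theorem D.1 of \cite{oksendal2013stochastic} (this is exactly where conditions \ref{regularity-verTHM2}, \ref{regularity-verTHMLipschitz} and the local boundedness of the second derivative enter), applies standard It\^o to each $f_i$, and passes to the limit using \eqref{regularity-verTHM} and bounded convergence. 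Both devices are legitimate; indeed the paper's remark following the theorem explicitly notes that a local-time version of It\^o's formula could replace the approximation argument and would even permit relaxing \ref{regularity-verTHM2}. What the paper's route buys is that it stays within standard It\^o calculus at the price of the approximation machinery; what your route buys is directness, but in the present $d$-dimensional setting it requires citing a nontrivial change-of-variable formula with local time on surfaces, and --- as you candidly flag --- verifying its hypotheses on the Lipschitz surface $\partial C$ is the delicate point you leave unfinished, so to make your version fully rigorous you would need to supply that verification (or fall back on the paper's approximation argument precisely there).
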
 

\begin{proof} Recall that the state space $E \subseteq \mathbb{R}^d$ is here assumed to be an open set and note that $E$ can here, without loss of generality, be taken to be connected, since $X$ has continuous sample paths. Let $\{C_k\}_{k=1}^\infty$ be an increasing sequence of open, bounded and connected sets with $\overline{C}_k\subseteq C$ and $\cup_{k=1}^\infty C_k=C$. Consider arbitrary $y\in E$ and $x\in C$, which implies that $x\in C_k$ for {any $k\geq k'$, for some $k'$}. Let $\tau_k=\inf\{t\geq0:X_t \notin C_k\}\wedge k$. Use \ref{regularity-verTHM-onC}, It\^o's formula and \eqref{freeB3} to obtain
\begin{align} 
f(x,y) &= \E_x\left(e^{-r{\tau_k}}f(X_{\tau_k},y) - \int_0^{\tau_k}e^{-rt}(A_Xf(X_t,y)-rf(X_t,y))dt\right)\\
&=\E_x\left(e^{-r{\tau_k}}f(X_{\tau_k},y)\right)
\end{align}
where the It\^o integral has vanished by the continuity of $\sigma(x)$, the continuity of the trajectories of $X$, the continuity of $\frac{\partial^2 f(x,y)}{\partial x^2}$ on $C$, and the boundedness of $X_s$ on the bounded stochastic interval $[0,\tau_k]$. Note that \eqref{freeB3-2}, \ref{regularity-verTHM-onC} and \ref{regularity-verTHMLipschitz} imply, for fixed $y\in E$, that $f(\cdot,y)$ is continuous on $\overline{C}$ with $f(x,y)=F(x,y)$ on $\partial C$, where $\partial C\neq \emptyset$ by assumption. Since, $f(\cdot,y)$ is bounded on $\overline{C}$ (cf. \ref{regularity-verTHMbounded}) we may thus use the bounded convergence theorem, and that $\tau_k \rightarrow \hat\tau$  a.s. as $k\rightarrow\infty$ (cf. \eqref{VerTHM1}), to obtain
\begin{align} \label{verproof-okt}
f(x,y) 
= \lim_{k\rightarrow\infty}\E_x\left(e^{-r{\tau_k}}f(X_{\tau_k},y)\right)
= \E_x\left(e^{-r{\hat\tau}}F(X_{\hat\tau},y)\right).
\end{align}
Using \eqref{freeB3-2} and \eqref{VerTHM1} we see that this implies that
\begin{align} \label{Ver1}
f(x,y) = \E_x(e^{-r\hat\tau}F(X_{\hat\tau},y)) \enskip \textrm{on }E\times E.
\end{align}
Definition \ref{def:equ_stop_time}, Definition \ref {def:equ_func} and \eqref{Ver1} yield the following: if we can prove that 
\begin{align} 
&f(x,x)-F(x,x)\geq 0, \enskip \textrm{for each $x\in E$, and} \label{VerThmEqCond1}\\
&\liminf_{h\searrow 0}\frac{f(x,x)-J_{\hat\tau\circ \theta_{\tau_h}+\tau_h}(x)}{\E_x(\tau_h)}\geq 0, \enskip \textrm{for each $x\in E$,} \label{VerThmEqCond2}
\end{align}
then it follows that ${\hat\tau}$ in \eqref{VerTHM1} is an equilibrium stopping time, that ${J(x):=}f(x,x)$ is the corresponding equilibrium value function and that $f(x,y)$ is the corresponding auxiliary function. Thus, all we have left to do is to show that \eqref{VerThmEqCond1} and \eqref{VerThmEqCond2} are satisfied.

Since $f(x,y)$ solves the time-inconsistent variational inequalities we know that $f(x,x)>F(x,x)$ on $C$ and $f(x,x)=F(x,x)$ on $E \backslash C$. It follows that \eqref{VerThmEqCond1} holds.

For each fixed $y\in E$, observe that $B(y,\epsilon)$ and $B(y,\epsilon)\cap C$ are open, $B(y,\epsilon)\backslash \partial (B(y,\epsilon)\cap C) = B(y,\epsilon)\backslash \partial C$ and that $\partial (B(y,\epsilon)\cap C)$ is a Lipschitz surface (cf. \ref{regularity-verTHMLipschitz}). It therefore follows from \ref{regularity-verTHM2} and Theorem D.1 in \cite[~App. D]{oksendal2013stochastic} that there exists, for each fixed $y\in E$ and some $\epsilon>0$, a sequence of functions $\{f_i(\cdot,y)\}_{i=1}^\infty$ such that 
\begin{enumerate}[label=(\alph*)]
\item \label{OksendalApp1} $f_i(\cdot,y) \in \mathcal{C}(\overline{B}(y,\epsilon))\cap \mathcal{C}^2(B(y,\epsilon))$ for each $i$,

\item \label{OksendalApp2} $f_i(\cdot,y)$ converges to $f(\cdot,y)$ uniformly on compact subsets of $\overline{B}(y,\epsilon)$ as $i \rightarrow \infty$,

\item \label{OksendalApp3} $A_Xf_i(\cdot,y)$ converges to $A_Xf(\cdot,y)$ uniformly on compact subsets of $B(y,\epsilon)\backslash \partial C$ as $i \rightarrow \infty$,

\item \label{OksendalApp4} $\{A_Xf_i(\cdot,y)\}_{i=1}^\infty$ is locally bounded on $\overline{B}(y,\epsilon)$.
\end{enumerate} 
For any fixed $x\in E$, $h \in(0,\epsilon)$, $i$ and $k>0$ it thus follows from It\^o's formula that
\begin{align}
f_i(x,x) =
\E_x\left(e^{-r{\tau_h\wedge k}}f_i(X_{\tau_h\wedge k},x) - \int_0^{\tau_h\wedge k}e^{-rt}(A_Xf_i(X_t,x)-rf_i(X_t,x))dt\right).
\end{align}
where the It\^o integral has vanished for reasons analogous to the above. Because of the continuity in \ref{OksendalApp1} and $X_t$ being bounded on $[0,\tau_h]$ we can use the bounded convergence theorem to obtain
\begin{align}
&f_i(x,x) = \lim_{k\rightarrow \infty} f_i(x,x)\\
&= \E_x\left(\lim_{k\rightarrow \infty}\left(e^{-r{\tau_h\wedge k}}f_i(X_{\tau_h\wedge k},x) - 
\int_0^{\tau_h\wedge k}e^{-rt}(A_Xf_i(X_t,x)-rf_i(X_t,x))dt\right)\right) \\
&= \E_x\left(e^{-r{\tau_h}}f_i(X_{\tau_h},x) - \int_0^{\tau_h}e^{-rt}(A_Xf_i(X_t,x)-rf_i(X_t,x))dt\right).
\end{align}
Set the undefined $\frac{\partial^2 f(x,y)}{\partial x^2}, x\in \partial C,$ to zero.  
Now use the convergence and boundedness properties in \ref{OksendalApp2}, \ref{OksendalApp3}, and \ref{OksendalApp4}, and the regularity in \eqref{regularity-verTHM}, and the bounded convergence theorem to obtain  
\begin{align} 
f(x,x) 
& = \lim_{i \rightarrow \infty} f_i(x,x) \\
& = \E_x\left(\lim_{i \rightarrow \infty}\left( e^{-r{\tau_h}}f_i(X_{\tau_h},x) - \int_0^{\tau_h}e^{-rt}(A_Xf_i(X_t,x)-rf_i(X_t,x))dt\right)\right)\\
& = \E_x\left(e^{-r{\tau_h}}f(X_{\tau_h},x) - \int_0^{\tau_h}e^{-rt}(A_Xf(X_t,x)-rf(X_t,x))dt\right).\label{Ver2}
\end{align}
Now use \eqref{Ver1} and the strong Markov property to see that
\begin{align}
\E_x(e^{-r{\tau_h}}f(X_{\tau_h},x))
&= \E_x(e^{-r\tau_h}\E_{X_{\tau_h}}(e^{-r\hat\tau}F(X_{\hat\tau},x)))\\
&= \E_x(e^{-r(\hat\tau\circ \theta_{\tau_h}+{\tau_h})}F(X_{\hat\tau\circ \theta_{\tau_h}+\tau_h},x))\\
&= J_{\hat\tau\circ \theta_{\tau_h}+{\tau_h}}(x),\label{Ver3}
\end{align}
where we also relied on \ref{regularity-verTHMbounded} and \eqref{freeB3-2}. Using \eqref{Ver2} and \eqref{Ver3} we rewrite the left hand side of \eqref{VerThmEqCond2} as
\begin{align}
& \liminf_{h\searrow 0}\frac{-\E_x\left(\int_0^{\tau_h}e^{-rt}(A_Xf(X_t,x)-rf(X_t,x))dt\right)}{\E_x(\tau_h)}\\
& = - \limsup_{h\searrow 0}\frac{\E_x\left(\int_0^{\tau_h}e^{-rt}(A_Xf(X_t,x)-rf(X_t,x))dt\right)}{\E_x(\tau_h)}\label{verthm-ad1}.
\end{align}
Hence, all we have left to do in order to show that \eqref{VerThmEqCond2} is true, i.e. to conclude the proof, is to show that \eqref{verthm-ad1} is non-negative for all $x\in E$. Let us do this.

First consider an arbitrary $x \in E \backslash \partial C$. Recall that $\mu(x),\sigma(x)$ and the trajectories of $X$ are continuous. Note that $h\in(0,\epsilon)$ implies that the process $(X_t)_{0\leq t \leq \tau_h}$ with $X_0=x$ stays in $B(x,\epsilon)$. The regularity properties in \ref{regularity-verTHM2} therefore imply that the integrand in \eqref{verthm-ad1}, i.e. $e^{-rt}(A_Xf(X_t,x)-rf(X_t,x))$, is a.e. continuous in $t$ a.s. Recall that $X_s$ is bounded on $[0,\tau_h]$ when $h\in(0,\epsilon)$. Note also that that if we pick a sufficiently small $h=h(\omega)$ then the integrand in \eqref{verthm-ad1} is continuous in $t$ a.s, since we can for sufficiently small $h=h(\omega)$ avoid the issue that $\frac{\partial^2 f(x,y)}{\partial x^2}$ is arbitrarily set to $0$ at $\partial C$. It follows that we may use the bounded convergence theorem and Lebesgue's differentiation theorem to obtain that \eqref{verthm-ad1} is equal to
\begin{align} \label{freeXX}
& -(A_Xf(x,x)+rf(x,x))\geq 0, \textrm{ for } x \in E \backslash \partial C
\end{align}
where the inequality follows from \eqref{freeB1}. Now consider an arbitrary $x \in \partial C$. Replace the integrand in \eqref{verthm-ad1} with a right-continuous version (in $t$ a.s.) and  use \ref{cond5} and \eqref{regularity-verTHM} in the following way
\begin{align}
& - \limsup_{h\searrow 0}\frac{\E_x\left(\int_0^{\tau_h}e^{-rt}(A_Xf(X_t,x)-rf(X_t,x))dt\right)}{\E_x(\tau_h)}\\
& = - \limsup_{h\searrow 0}\frac{\E_x\left(\int_0^{\tau_h}\lim_{k\searrow 0} \sup_{0 < l\leq k}e^{-r(t+l)}(A_Xf(X_{t+l},x)-rf(X_{t+l},x))dt\right)}{\E_x(\tau_h)}\\
& \geq - \limsup_{z \notin \partial C\rightarrow x} \left( A_Xf(z,x)-rf(z,x)\right) \geq 0, \textrm{ for } x \in \partial C.
\end{align}
We have thus shown that \eqref{verthm-ad1} is non-negative for all $x\in E$.
\end{proof}

\begin{rem} \label{rem-inf-not-needed} In the case $r>0$ then the condition in \eqref{VerTHM1} is not necessary in order for the verification theorem to be true, since in this case \ref{regularity-verTHMbounded} is sufficient to obtain \eqref{verproof-okt} (using also our convention regarding expected values and infinite stopping times, as described in the beginning of Section \ref{sec:problem-formulation}).
\end{rem}

\begin{rem} The continuous differentiability requirement \ref{regularity-verTHM2} and requirement \ref{regularity-verTHMLipschitz} imply that we can approximate the function $f({\cdot},y)$ by the sequence of $\mathcal{C}^2$ functions $f_i({\cdot},y)$, on which we can apply the standard It\^o formula. After this we let $i\rightarrow \infty$ and effectively find that Dynkin's formula \eqref{Ver2} holds. We remark that the continuous differentiability requirement \ref{regularity-verTHM2} could in some settings be relaxed if we instead of using the current approach were to use a more general version of  It\^o's formula based on the concept of local time, see e.g. \cite{karatzas2012brownian,peskir2006optimal}.
\end{rem}

\begin{rem} Let us underline that we can now apply the standard procedure to use the verification theorem in order to find equilibrium value functions and equilibrium stopping times in particular cases. More precisely:
\begin{enumerate} [label=(\roman*)]
\item Make an ansatz, i.e. make an educated guess of how the solution $f(x,y)$ to the time-inconsistent variational inequalities should look like. The guess $f(x,y)$ should typically have traits in common with $F(x,y)$ and involve unspecified parameters, see e.g. the $a,b$ and $x^*$ in Example \ref{optimisticEx2} below.

\item Use the verification theorem to verify that $f(x,y)$ can solve the time-inconsistent variational inequalities (and the regularity conditions of the verification theorem) for some specific values of the parameter(s). Note that the point of step (ii) is two-fold 1) to make sure that the guess $f(x,y)$ has any chance of solving the time-inconsistent variational inequalities, and 2) to determine the unspecified parameters of $f(x,y)$. 

\item If the previous steps were successful then you may use the verification theorem to conclude that the guess $f(x,y)$, with the specified parameter(s), is indeed the auxiliary function, that $J(x):=f(x,x)$ is the corresponding equilibrium value function and that $\hat\tau:= \inf\{t\geq 0: X_t\notin C\}$ is the equilibrium stopping time, where $C:= \left\{x \in E: f(x,x) > F(x,x) \right\}$.
\end{enumerate}
\end{rem} 
 
In the setting of the present section we obtain the following result saying under sufficient regularity it holds for standard (time-consistent) stopping problems  that equilibrium stopping times are optimal.

\begin{thm} \label{newresB} Suppose $X$ is the strong solution to the SDE \eqref{SDE} and the function $F(x)$ in \eqref{newresA0} is non-negative and continous. Suppose an equilibrium stopping time $\hat\tau$ for the standard stopping problem \eqref{newresA0} exists and the corresponding auxiliary function (Definition \ref{def:equ_func}) is sufficiently regular to be a solution to the time-inconsistent variational inequalities and that conditions \eqref{regularity-verTHM} and \eqref{VerTHM1} are satisfied. Suppose the family 
$\{J_{\hat\tau }(X_\tau): \tau\leq \hat\tau\}$ is uniformly integrable, for each starting value $x \in E$, where $J_{\hat\tau }(x)$ is the equilibrium value function. Then, $\hat\tau$ is also an optimal stopping time for \eqref{newresA0}.
\end{thm}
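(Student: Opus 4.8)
The plan is to exploit the fact that, for the standard problem \eqref{newresA0}, the reward $F(x,y)=F(x)$ does not depend on the second variable, so that the auxiliary function of Definition \ref{def:equ_func} collapses to the equilibrium value function: $f_{\hat\tau}(x,y)=\E_x(e^{-r\hat\tau}F(X_{\hat\tau}))=J_{\hat\tau}(x)$, independently of $y$. Consequently the continuation region $C=\{x\in E: f_{\hat\tau}(x,x)>F(x,x)\}=\{x\in E: J_{\hat\tau}(x)>F(x)\}$ is precisely the continuation region of the standard stopping problem, and the time-inconsistent variational inequalities restrict on the diagonal to the classical variational inequalities for optimal stopping: $J_{\hat\tau}(x)=F(x)$ on $E\setminus C$ by \eqref{freeB3-2}, $A_XJ_{\hat\tau}(x)-rJ_{\hat\tau}(x)=0$ on $C$ by \eqref{freeB3}, and $A_XJ_{\hat\tau}(x)-rJ_{\hat\tau}(x)\le 0$ on $E\setminus\partial C$ by \eqref{freeB1}. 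Moreover $J_{\hat\tau}\ge F\ge 0$, since $J_{\hat\tau}>F$ on $C$, $J_{\hat\tau}=F$ off $C$, and $F$ is assumed non-negative.

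The first substantial step is to show that $e^{-rt}J_{\hat\tau}(X_t)$ is a non-negative $r$-supermartingale, i.e.\ that $\E_x(e^{-r\sigma}J_{\hat\tau}(X_\sigma))\le J_{\hat\tau}(x)$ for every bounded stopping time $\sigma$. For this I would mimic the $\mathcal{C}^2$-approximation argument from the proof of Theorem \ref{verificTHM}: the regularity \ref{regularity-verTHM-onC}, \ref{regularity-verTHM2} and \ref{regularity-verTHMLipschitz} make $J_{\hat\tau}$ globally $\mathcal{C}^1$ on $E$ and $\mathcal{C}^2$ on $E\setminus\partial C$ with locally bounded second derivatives near the Lipschitz boundary $\partial C$, so one approximates by $\mathcal{C}^2$ functions $f_i$, applies It\^o's formula, and passes to the limit using that $X$ spends almost no time on $\partial C$ (condition \eqref{regularity-verTHM}) together with \ref{cond5}. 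This yields the Dynkin identity analogous to \eqref{Ver2},
\[J_{\hat\tau}(x)=\E_x\Big(e^{-r\sigma}J_{\hat\tau}(X_\sigma)-\int_0^\sigma e^{-rt}\big(A_XJ_{\hat\tau}(X_t)-rJ_{\hat\tau}(X_t)\big)\,dt\Big),\]
and the supermartingale inequality then follows because the integrand is non-positive a.e.\ by \eqref{freeB1} and \eqref{regularity-verTHM}. Here it is crucial that the $y$-independence lets us evaluate $f(X_\sigma,x)=J_{\hat\tau}(X_\sigma)$ on the diagonal.

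It then remains to dominate the value of an arbitrary strategy. For any stopping time $\tau$, the majorant property gives $\E_x(e^{-r\tau}F(X_\tau))\le\E_x(e^{-r\tau}J_{\hat\tau}(X_\tau))$, and applying the supermartingale inequality to $\sigma=\tau\wedge k$ and letting $k\to\infty$ — using non-negativity of $J_{\hat\tau}$ together with the uniform integrability hypothesis and \eqref{VerTHM1} to justify the limit — yields $\E_x(e^{-r\tau}F(X_\tau))\le J_{\hat\tau}(x)$, and hence $\sup_\tau\E_x(e^{-r\tau}F(X_\tau))\le J_{\hat\tau}(x)$. Since conversely $J_{\hat\tau}(x)=\E_x(e^{-r\hat\tau}F(X_{\hat\tau}))\le\sup_\tau\E_x(e^{-r\tau}F(X_\tau))$ trivially, we obtain $J_{\hat\tau}(x)=\sup_\tau\E_x(e^{-r\tau}F(X_\tau))$, so that $\hat\tau$ attains the supremum and is an optimal stopping time for \eqref{newresA0}. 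I expect the main obstacle to be the rigorous passage through the non-smooth free boundary $\partial C$ in establishing the supermartingale property, and, to a lesser extent, the limit $k\to\infty$ for unbounded stopping times, where the uniform integrability assumption and the non-negativity of $F$ do the essential work.
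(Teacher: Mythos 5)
Your proposal is correct and follows essentially the same route as the paper: both hinge on the observation that for a $y$-independent reward the auxiliary function collapses to the equilibrium value function $J_{\hat\tau}$, so that the time-inconsistent variational inequalities restrict to the classical variational inequalities of optimal stopping. The only difference is that the paper concludes by citing the standard verification theorem (\cite[Theorem 10.4.1]{oksendal2013stochastic}), whereas you unpack that citation into its internal supermartingale--majorant--Fatou argument (with the uniform integrability hypothesis, which in your direct version is in fact only a harmless redundancy, since $J_{\hat\tau}(x)=\E_x(e^{-r\hat\tau}F(X_{\hat\tau}))$ holds by definition and non-negativity plus Fatou suffices for the limits).
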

\begin{proof} The reward function $F(x)$ in \eqref{newresA0} does not depend on $y$. Hence, the auxiliary function does not depend $y$, which means that it can be written as $f_{\hat\tau}(x)=\E_x(e^{-r{\hat\tau}}F(X_{\hat\tau}))=J_{\hat\tau}(x)$ --- in other words, in this case, the auxiliary function is equal to the equilibrium value function. It is now easy to see that if $J_{\hat\tau}(x)$ solves the time-inconsistent variational inequalities then it also solves the standard variational inequalities (or equivalently, free boundary problem) corresponding to the problem of optimal stopping in \eqref{newresA0}, cf. e.g. \cite[Theorem 10.4.1]{oksendal2013stochastic}; hence, standard verification arguments can be used to show that $J_{\hat\tau}(x)$ is in fact also the optimal value function (for the standard theory we refer to \cite{peskir2006optimal} and \cite[~ch. 10]{oksendal2013stochastic}). The result follows.
\end{proof}

\begin{ex} \label{optimisticEx2} Let us re-analyze the optimistic holder of the perpetual American option from Example \ref{optimistic-American} using the verification theorem. The advantage here is that we do not have to solve a sequence of free boundary problems, but can make a direct ansatz for the value function. Since the state process is a Wiener process it follows that $E=\R$. As $x\mapsto e^{-cx},\,x\mapsto e^{cx},\,c=\sqrt{2r},$ are the fundamental solutions to $A_Xf={rf}$ and due to symmetry, a natural guess for a solution to the time-inconsistent variational inequalities is
\[f(x,y)=\begin{cases}
x&,\;x\geq x^*,\\
ae^{-cx}+be^{cx}&,\;-x^*< x<x^*, y\geq0\\
0&,\;x\leq-x^*,
\end{cases}\]		
and $f(x,y)=f(-x,-y),\,y<0$, with the continuation region $C=(-x^*,x^*)$, for some parameters $a$, $b$ and $x^*$ to be determined. For $f{(\cdot,y)}$ to be continuous, we need
	\begin{align*}
	ae^{-cx^*}+be^{cx^*}&=x^*,\\
	ae^{cx^*}+be^{-cx^*}&=0.
	\end{align*}
For sufficient smoothness of $f(\cdot,x^*)$ we furthermore need 
	\[-cae^{-cx^*}+cbe^{cx^*}= 1.\]
	Elementary arguments yield that this system of equations indeed has a solution given by
\begin{align}
&f(x,y)=
\begin{cases}
x&,\;x\geq x^*,\\
\frac{1}{2}e^{cx^*}(x^*-\frac{1}{c})e^{-cx}+\frac{1}{2}e^{-cx^*}(x^*+\frac{1}{c})e^{cx}&,\;-x^*< x<x^*, y\geq0\\
0&,\;x\leq-x^*,
\end{cases}
\end{align}
where $x^*\in(0,1/c)$ satisfies
\begin{align}
-e^{4cx^*}(x^*-\frac{1}{c})=x^*+\frac{1}{c}. \label{guess-ex-} 
\end{align}
Using elementary methods one can now verify that:
\begin{itemize}
\item
$x \mapsto f(x,y)$ is convex and $\frac{\partial f(x^*,y)}{\partial x}=1$ for $x,y\geq 0$ which implies that 
$C= (-x^*,x^*)=\left\{x \in \R: f(x,x)-F(x,x)> 0 \right\}$, which also implies that \eqref{regularity-verTHM} and the condition in \eqref{VerTHM1} are fulfilled (we remark that the last condition is not necessary since $r>0$, cf. Remark \ref{rem-inf-not-needed}),
\item  conditions \eqref{freeB3-2}, \ref{regularity-verTHM-onC}, \ref{regularity-verTHMbounded} and \ref{regularity-verTHMLipschitz} hold. 
\end{itemize}
Let us explicitly verify condition \ref{regularity-verTHM2}. Naively taking derivatives gives us
\[\frac{\partial f(x,y)}{\partial x}=\begin{cases}
1&,\;x\geq x^*,\\
-\frac{c}{2}e^{cx^*}(x^*-\frac{1}{c})e^{-cx}+\frac{c}{2}e^{-cx^*}(x^*+\frac{1}{c})e^{cx}&,\;-x^*< x<x^*, y\geq0\\
0&,\;x\leq-x^*,
\end{cases}\]
\[\frac{\partial f(x,y)}{\partial x}=\begin{cases}
0&,\;x\geq x^*,\\
\frac{c}{2}e^{cx^*}(x^*-\frac{1}{c})e^{cx}-\frac{c}{2}e^{-cx^*}(x^*+\frac{1}{c})e^{-cx}&,\;-x^*<x<x^*, y<0\\
-1&,\;x\leq-x^*.
\end{cases}\]
It is easy to check that these derivatives are well defined except at points $(x,y)$ satisfying $(x,y)=(x^*,y)$ with  $y<0$ or $(x,y)=(-x^*,y)$ with $y\geq 0$. Hence, for fixed $y$, $f(\cdot,y)\in \mathcal{C}^1({B}(y,\epsilon))$, for a sufficiently small $\epsilon>0$. 
Naively taking derivatives again gives us
\[\frac{\partial^2 f(x,y)}{\partial x^2}=\begin{cases}
0&,\;x> x^*,\\
\frac{c^2}{2}e^{cx^*}(x^*-\frac{1}{c})e^{-cx}+\frac{c^2}{2}e^{-cx^*}(x^*+\frac{1}{c})e^{cx}&,\;-x^*< x<x^*, y\geq0\\
0&,\;x<-x^*,
\end{cases}\]
\[\frac{\partial^2 f(x,y)}{\partial x^2}=\begin{cases}
0&,\;x> x^*,\\
\frac{c^2}{2}e^{cx^*}(x^*-\frac{1}{c})e^{cx}+\frac{c^2}{2}e^{-cx^*}(x^*+\frac{1}{c})e^{-cx}&,\;-x^*<x<x^*, y<0\\
0&,\;x<-x^*.
\end{cases}\]
We thus see that $f(\cdot,y)\in \mathcal{C}^2({B}(y,\epsilon)\backslash \partial C)$, for any $y\in E$, and that this derivative is locally bounded. 
 We have thus verified \ref{regularity-verTHM2}. Now use that $c=\sqrt{2r}$ and the above to obtain\\
\noindent $A_X f(x,y) -rf(x,y)=\frac{1}{2}\frac{\partial^2f(x,y)}{\partial x^2}-rf(x,y)$
\[=\begin{cases}
-rx<0&,\;x>x^*,\\
0&,\;-x^*< x<x^*, y\geq0\\
0&,\;x<-x^*,
\end{cases}\]
\noindent $A_X f(x,y) -rf(x,y)=\frac{1}{2}\frac{\partial^2f(x,y)}{\partial x^2}-rf(x,y)$
\[=\begin{cases}
0&,\;x>x^*,\\
0&,\;-x^*< x<x^*, y<0\\
rx<0&,\;x<-x^*.
\end{cases}\]
This means that \eqref{freeB1}, \eqref{freeB3} and \ref{cond5} are also satisfied. We have thus verified that the function $f(x,y)$ 
{with $x^*$ determined in} \eqref{guess-ex-} is a solution to the time-inconsistent variational inequalities. The verification theorem therefore implies that $\hat\tau=\inf\{t\geq0: X_t \notin (-x^*,x^*)\}$ is an equilibrium stopping time {and that the corresponding equilibrium value function is}
\begin{align}
&J(x)=
\begin{cases}
x&,\;x\geq x^*,\\
\frac{1}{2}e^{cx^*}(x^*-\frac{1}{c})e^{-cx}+\frac{1}{2}e^{-cx^*}(x^*+\frac{1}{c})e^{cx}&,\;0\leq x<x^*,\\
\frac{1}{2}e^{cx^*}(x^*-\frac{1}{c})e^{cx}+\frac{1}{2}e^{-cx^*}(x^*+\frac{1}{c})e^{-cx}&,\;-x^*< x<0,\\
-x&,\;x\leq-x^*.
\end{cases}
\end{align}
\end{ex}

\begin{ex}\label{eq-selling-strat} \textbf{Equilibrium selling strategies under endogenous habit formation and exponential utility}.
We will now study a model for selling strategies under exponential utility and endogenous habit formation, using the verification theorem. Section \ref{sec:prev-lit} contains information about previous literature on related problems. 

Consider an investor who wishes to optimally dispose of an asset in a Black-Scholes market. Specifically, the price of the asset, measured in e.g USD or MUSD, is given by the process $X$ satisfying
\begin{align}
dX_t=\sigma X_tdW_t.
\end{align}
We model the utility of the investor as exponential, but we also let her utility be inversely related to the present price of the asset, which makes the problem time-inconsistent. Specifically, we assume that the agent wishes to maximize
\begin{align}
\E_x\left(e^{-r\tau}\left(1-e^{-a(X_\tau+g(x)-k)}\right)\right),
\end{align}
where $a,r,k>0$ are constants and $g:[0,\infty)\rightarrow \mathbb{R}$ is a non-increasing bounded function such that $x \mapsto x+g(x)$ is non-decreasing and $g(0)=0$.

We will study this endogenous habit formation selling problem without making any functional assumptions for $g(\cdot)$. In Figure \ref{fig-habit}, we present the solution to the problem for a particular specification of $g(\cdot)$.
\begin{rem} The reward function of the present model corresponds to the function $F(x,y):= 1-e^{-a(x+g(y)-k)}$, which is clearly time-inconsistent and bounded on $[0,\infty)^2$. If $g(\cdot)=0$ and $k=0$ then we recover a standard exponential utility function.
\end{rem}

\begin{rem} We interpret this model as the investor having formed a habit regarding what she thinks the asset should be worth, and the larger the current value of the asset is the less happy she will be for a given selling price in the future. The parameter $a$ is a measure of the risk aversion of the investor: a larger $a$ means more risk aversion.  The parameter $r$ is a measure of the impatience of the investor: a larger $r$ means more impatience. The nonstandard feature of this model is the function $g(\cdot)$ which we interpret to be a measure of the habit formation of the investor. The assumption that $g(\cdot)$ is non-increasing is interpreted as follows: the smaller the current price $x$ is, the happier the investor is given the same future selling price. The assumption that $x \mapsto x+g(x)$ is increasing means that the investor cannot become less happy for a larger selling price given immediately selling. The parameter $k$ allows the possibility for negative utility.
\end{rem}

A reasonable starting point is to try with a one-sided solution $C=(0,x^*)$. We therefore guess that the auxiliary function is
\[f(x,y)=\begin{cases}
1-e^{-a(x+g(y)-k)}&,\;x\geq x^*,\\
\E_x\left(e^{-r\tau_{[x^*,\infty)}}\left(1-e^{-a\left({X_{\tau_{[x^*,\infty)}}}+g(y)-k\right)}\right)\right)&,\;0<x< x^*,
\end{cases}\]
for some $x^*$ to be determined. Using standard theory, see e.g \cite[~ch. 9,10]{oksendal2013stochastic}, we note that the function $f(x,y)$ can be simplified using
\begin{align}
\E_x\left(e^{-r\tau_{[x^*,\infty)}}\left(1-e^{-a\left({X_{\tau_{[x^*,\infty)}}}+g(y)-k\right)}\right)\right)
= \left(\frac{x}{x^*}\right)^\gamma\left(1-e^{-a( {x^*}+g(y)-k)}\right)
\end{align}
where $\gamma=\frac{1}{2}+\sqrt{\frac{1}{4}+\frac{2r}{\sigma^2}}$. Naively taking derivatives therefore gives us
\[\frac {\partial f(x,y)}{\partial x}=\begin{cases}
ae^{-a(x+g(y)-k)}&,\;x\geq x^*,\\
\gamma\frac{x^{\gamma-1}}{{x^*}^\gamma}\left(1-e^{-a(x^*+g(y)-k)}\right)&,\;0<x< x^*.
\end{cases}\]
In order for \ref{regularity-verTHM2} to be fulfilled $x^*$ must satisfy
\begin{align} \label{exp-example4}
x^*ae^{-a(x^*+g(x^*)-k)} = 
\gamma\left(1-e^{-a(x^*+g(x^*)-k)}\right)
\end{align}
which means that $x^*$ must be the zero of the function
\begin{align}\label{exp-example3}
H(x)= \gamma- e^{-a(x +g(x)-k)}(\gamma+ax),
\end{align}
which must be verified to exist uniquely in $(0,\infty)$ for the particular choice of $g(\cdot)$. Note that a unique $x^*$ exists if there is no habit formation i.e. with $g(\cdot)=0$; to see this note that if $g(\cdot)=0$ then $H(0)=\gamma(1-e^{ak})<0$ and $H'(x)= ae^{-a(x-k)}(ax+\gamma-1)>0$ on $[0,\infty)$, since $\gamma>1$.

Taking derivatives again gives us
\[\frac {\partial^2 f(x,y)}{\partial x^2}=\begin{cases}
 -a^2e^{-a(x+g(y)-k)}&,\;x> x^*,\\
\gamma(\gamma-1)\frac{x^{\gamma-2}}{{x^*}^\gamma}\left(1-e^{-a(x^*+g(y)-k)}\right)&,\;0<x< x^*.
\end{cases}\]   
Using $A_X f(x,y) = \frac{1}{2}x^2\sigma^2\frac{\partial^2f(x,y)}{\partial x^2}$ and $\gamma(\gamma-1)= \frac{2r}{\sigma^2}$, we obtain
 
\noindent $A_X f(x,y) -r f(x,y)=$
\[=\begin{cases}
-\frac{x^2}{2}\sigma^2a^2e^{-a(x+g(y)-k)} - r\left(1-e^{-a(x+g(y)-k)}\right)&,\;x> x^*,\\
\frac{x^2}{2}\sigma^2\frac{2r}{\sigma^2}\frac{x^{\gamma-2}}{{x^*}^\gamma}\left(1-e^{-a(x^*+g(y)-k)}\right) 
-r\left(\frac{x}{x^*}\right)^\gamma\left(1-e^{-a( {x^*}+g(y)-k)}\right)&,\;0<x< x^*,
\end{cases}\]
\[=\begin{cases}
-\frac{x^2}{2}\sigma^2a^2e^{-a(x-g(y)-k)} - r\left(1-e^{-a(x+g(y)-k)}\right)&,\;x> x^*,\\
0&,\;0<x< x^*,
\end{cases}\]
which implies that $f(x,y)$ satisfies \eqref{freeB3}. It follows from \eqref{exp-example4} that $x^*+g(x^*)-k>0$, which since $x+g(x)$ is non-decreasing implies that
\[A_X f(x,x) -rf(x,x) =
-\frac{x^2}{2}\sigma^2a^2e^{-a(x+g(x)-k)} - r\left(1-e^{-a(x+g(x)-k)}\right)<0, \enskip x>x^*.
\]
Hence, \eqref{freeB1} is satisfied. Condition \ref{cond5} is verified in the same way. Conditions \ref{regularity-verTHM-onC}, \ref{regularity-verTHM2},  \ref{regularity-verTHMbounded} and \ref{regularity-verTHMLipschitz} are directly verified. Now, if $g(\cdot)$ is such that
\begin{align}\label{exp-example2}
C:=(0,x^*)= \left\{x \in \R: f(x,x)-F(x,x)> 0 \right\}
\end{align}
holds then conditions \eqref{freeB3-2} and \eqref{regularity-verTHM}  follow, and all the conditions of the verification theorem are hence fulfilled (the condition in \eqref{VerTHM1} is not necessary in this case, cf. Remark \ref{rem-inf-not-needed}).

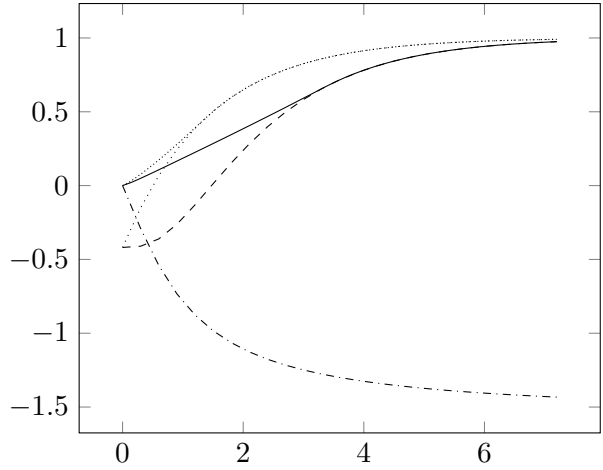
\begin{figure}[h]
	\begin{center}
	\begin{tikzpicture} 
	\begin{axis} 
	   	\addplot[style=dashdotted,domain=0:7.2] {rad(90-atan(x))-3.14159265359/2}; 
			\addplot[style=dashed,domain=0:7.2] {1-exp(-0.7*(x+rad(90-atan(x))-3.14159265359/2-0.5))}; 
			\addplot[style=solid,domain=0:3.3524]
			{(x/3.3523990)^(1.17082039)*(1-exp(-0.7*(3.3523990+rad(90-atan(x))-3.14159265359/2-0.5)))}; 
      \addplot[style=solid,domain=3.3524:7.2]{1-exp(-0.7*(x+rad(90-atan(x))-3.14159265359/2-0.5))}; 
			\addplot[style=dotted,domain=0:7.2] {1-exp(-0.7*(x-0.5))}; 
			\addplot[style=densely dotted,domain=0:1.341152]
			{(x/1.341152)^(1.17082039)*(1-exp(-0.7*(1.341152-0.5)))}; 
			\addplot[style=densely dotted,domain=1.341152:7.2] {1-exp(-0.7*(x-0.5))}; 
	\end{axis} 
	\end{tikzpicture} 
		\end{center}
			\caption{$x\mapsto J(x)$ (solid) and $x\mapsto F(x,x)$ (dashed), with $g(x)=\mbox{arccot}(x) - \frac{\pi}{2}$, here $x^*\approx 3.3524$. $x\mapsto J(x)$ (densely dotted) and $x\mapsto F(x,x)$ (dotted), with $g(x)=0$, here $x^*\approx 1.3412$. $x\mapsto \mbox{arccot}(x) - \frac{\pi}{2}$ (dash-dotted). $a=0.7,r=0.1,k=0.5$ and $\sigma=1$.}\label{fig-habit}
\end{figure}

To show that \eqref{exp-example2} holds for the case $g(\cdot)=0$ it is sufficient to show that 
\[\left(\frac{x}{x^*}\right)^\gamma\left(1-e^{-a(x^*-k)}\right)>1-e^{-a(x-k)} , \enskip  0<x<x^*.\]
This is trivially true if the right side is non-positive since the left side is positive by \eqref{exp-example4}, and we may thus treat the right side and the left side as positive. It is therefore sufficient to show that
\begin{align}
\kappa(x):= \frac{{x^*}^\gamma}{1-e^{-a(x^*-k)}}x^{-\gamma}\left(1-e^{-a(x-k)}\right)<1, \enskip 0<x<x^*.
\end{align}
We obtain that $\kappa'(x)= \frac{{x^*}^\gamma}{1-e^{-a(x^*-k)}}x^{-\gamma-1}(-H(x))$, and since $x^*$ is assumed to be the unique zero of $H(\cdot)$ and $H(0)<0$ it follows that $\kappa'(x)>0$, where we also used \eqref{exp-example4} to see that the first fraction in $\kappa(x)$ is positive. Since $\kappa(x^*)=1$ it follows that $\kappa(x^*)<1$ for $0<x<x^*$ and we are done. In order to show that \eqref{exp-example2} holds when $g(\cdot) \neq 0$, we must show that
\[\left(\frac{x}{x^*}\right)^\gamma\left(1-e^{-a(x^*-k)}e^{-ag(x)}\right)> 1-e^{-a(x-k)}e^{-ag(x)}, \enskip 0<x<x^*.\]
This is trivially true if the right side is non-positive; to see this use that the left side is positive by \eqref{exp-example4} and since $g(\cdot)$ is non-increasing. We may thus treat both the left and the right sides as positive; and since $1-e^{-a(x-k)}>1-e^{-a(x-k)}e^{-ag(x)}$ we may also treat $1-e^{-a(x-k)}$ as positive. It is thus enough to show that 
$\left(\frac{x^*}{x}\right)^\gamma \frac{1-e^{-a(x-k)}e^{-ag(x)}}{1-e^{-a(x^*-k)}e^{-ag(x)}}<1, \enskip 0<x<x^*$. But
$\frac{1-e^{-a(x-k)}e^{-ag(x)}}{1-e^{-a(x^*-k)}e^{-ag(x)}}<\frac{1-e^{-a(x-k)}}{1-e^{-a(x^*-k)}}$ and the result follows from the case when $g(\cdot)=0$.

We conclude that if $g(\cdot)$ is such that $H(\cdot)$ in \eqref{exp-example3} has a unique zero $x^*$, then the equilibrium stopping time is $\hat\tau=\inf\{t\geq0: X_t \geq x^*\}$ and the equilibrium value function is
\[J(x)=\begin{cases}
1-e^{-a(x+g(x)-k)}&,\;x\geq x^*,\\
\left(\frac{x}{x^*}\right)^\gamma\left(1-e^{-a( {x^*}+g(x)-k)}\right)&,\;0<x< x^*.
\end{cases}\]

\end{ex}

\bibliographystyle{abbrv}
\bibliography{time-incon_stopping}

\end{document}